\documentclass[11pt,a4paper,lualatex]{amsart} % LuaLaTeXで日本語と英語を同時に扱える

\usepackage[marginparwidth=0pt,margin=20truemm]{geometry} % margin

% --------------------------------------------------------------------------
%		Packages and commands
% --------------------------------------------------------------------------

\usepackage{mypackage} % My packages.
\usepackage{mycommand} % My commands.

\usepackage{physics} % \dv{z} でd/dzを出すなどができるようになる

\DeclareMathOperator{\Resultant}{Res}
\renewcommand{\Res}{\Resultant}

\newcommand{\cyc}{\Phi^{\mathrm{cyc}}} 

\newcommand{\ab}{\mathrm{ab}}

% 表を描くとき\hlineより太い線を追加
\usepackage{ctable} 
% \specialrule{.05em}{1em}{0em} が\hlineと同じ
% \specialrule{.1em}{.05em}{.05em} はちょっと太い
% \specialrule{.2em}{.1em}{.1em} はもっと太い

%Comments

 %Use comments that we want to keep in the final version
%Switch to the following when you want to delete all the comments
%\newcommand{\Kaoru}[1]{}
%\newcommand{\Yuya}[1]{}
%\newcommand{\Kohei}[1]{}
%\newcommand{\cmt}[1]{}

% --------------------------------------------------------------------------
%		Hyperlinks
% --------------------------------------------------------------------------

\usepackage[pdfencoding=auto,hypertexnames=false]{hyperref}
\hypersetup{colorlinks=false}

% --------------------------------------------------------------------------
%		Theorem-like environments &  cross-reference
% --------------------------------------------------------------------------

\numberwithin{equation}{section} % Setting of equation numbers 
 % Setting of theorem numbers 
\usepackage{mytheoremeng} % My theorem environments (English) with cleveref package

% --------------------------------------------------------------------------
%		Drawing pass
% --------------------------------------------------------------------------
\usepackage{tikz}
\usetikzlibrary{arrows.meta, decorations, decorations.markings}
%\usetikzlibrary{decorations.pathreplacing}

% --------------------------------------------------------------------------
\begin{document}
% --------------------------------------------------------------------------

\title[Irreducibility of $\Delta_{3k,3}$]{Irreducibility of polynomials defining parabolic parameters of period $3$}

\author[J. Koizumi]{Junnosuke Koizumi}
\address{RIKEN iTHEMS, Wako, Saitama 351-0198, Japan}
\email{junnosuke.koizumi@riken.jp}

\author[Y. Murakami]{Yuya Murakami}
\address{Faculty of Mathematics, Kyushu University, 744, Motooka, Nishi-ku, Fukuoka, 819-0395, JAPAN}
\email{murakami.yuya.896@m.kyushu-u.ac.jp}

\author[K. Sano]{Kaoru Sano}
\address{NTT Institute for Fundamental Mathematics, NTT Communication Science Laboratories, NTT, Inc., 2-4, Hikaridai, Seika-cho, Soraku-gun, Kyoto 619-0237, Japan}
\email{kaoru.sano@ntt.com}

\author[K. Takehira]{Kohei Takehira}
\address{Graduate School of Science, Tohoku University, 
	6-3, Aoba, Aramaki-aza, Aoba-ku, 
	Sendai, 980-8578, Japan
        (when the research was conducted) \\
        Present address: NTT DATA Mathematical Systems Inc., Tokyo, Japan}
\email{kohei.takehira.p5@dc.tohoku.ac.jp (when the research was conducted) \\
    Present e-mail address: takehira@msi.co.jp}

\date{\today}

% --------------------------------------------------------------------------

\begin{abstract}
    Morton and Vivaldi defined the polynomials whose roots are parabolic parameters for a one-parameter family of polynomial maps. We call these polynomials delta factors.
    They conjectured that delta factors are irreducible for the family $z\mapsto z^2+c$.
    One can easily show the irreducibility for periods $1$ and $2$ by reducing it to the irreducibility of cyclotomic polynomials.
    However, for periods $3$ and beyond, this becomes a challenging problem.
    This paper proves the irreducibility of delta factors for the period $3$ and demonstrates the existence of infinitely many irreducible delta factors for periods greater than $3$.
\end{abstract}

\maketitle

% --------------------------------------------------------------------------

\tableofcontents

% --------------------------------------------------------------------------

% --------------------------------------------------------------------------

\section{Introduction} \label{sec: intro}

% --------------------------------------------------------------------------

This paper discusses the irreducibility of polynomials that appear in connection with discrete dynamical systems of a one-parameter family of polynomial maps.

Consider the polynomial $ f_c(z) = z^2 + c \in \Z [c][z] $ parametrized by a parameter $c$.
The periodic points of the dynamical system $ z \mapsto f_c(z) $ have long been a subject of interest.
For a positive integer $m$, the $ m $-th \textit{dynatomic polynomial} $\Phi_m^{*}$ is defined by
\[
    \Phi_m^{*}(z,c) \coloneqq \prod_{k \mid m} (f_c^{\circ k}(z) - z)^{\mu(m/k)},
\]
where $ f_c^{\circ k} $ is recursively defined by $f_c^{\circ 0}(z)=z$ and $f_c^{\circ k}=f_c\circ f_c^{\circ (k-1)}$ and $ \mu $ is the M\"{o}bius function.
For general properties of the dynatomic polynomials, see \cite{Silbook00}.

One important quantity associated with periodic points is the multiplier.
For a root $ \alpha \in \overline{\Q(c)} $ of $ \Phi_m^{*}({-},c) $, the \textit{multiplier} at $ \alpha $ is defined by $ \omega_m(\alpha) = (f_c^{\circ m})'(\alpha). $
Since the chain rule shows the equality
\[
    (f_c^{\circ m})'(\alpha) = \prod_{k=0}^{m-1} f_c'(f_c^{\circ k}(\alpha)),
\]
the multiplier is constant on the orbit, that is, the equality $ \omega_m(\alpha) = \omega_m(f_c(\alpha)) $ holds.
The $ m $-th \textit{multiplier polynomial} $ \delta_m $ is defined to be the monic polynomial satisfying
\[
    \delta_m(x,c)^m
    \coloneqq \Res_z(\Phi_m^{*}(z,c), x - (f_c^{\circ m})'(z))
    = \prod_{\alpha\colon \Phi_m^{*}(\alpha,c)=0}(x - \omega_m(\alpha)),
\]
where $ \Res_z $ is the resultant with respect to the variable $ z $.
The multiplier polynomial $ \delta_m $ is indeed a polynomial with integer coefficients (cf \cite{Vivaldi-Hatjispyros}). 
%Moreover, Huguin \cite{Hug21} showed the multiplier polynomial $ \delta_n(x,c) $ is an element of the ring $ \Z[4c, x] $ and is monic in $ 4c $ up to the multiplication of $ \pm 1 $. 
Here, we remark that Huguin~\cite{Hug21} and Murakami--Sano--Takehira~\cite{MST24} independently proved that $ \delta_m $ is in $ \Z[4c,x] $ and is monic in $4c$.
\cite{MST24} discussed generalizations of this fact for other one-parameter families.

An $m$-periodic point $ \alpha $ whose multiplier $ \omega_m(\alpha) $ is a root of unity is said to be \textit{parabolic}.
We say that a parameter $ \gamma $ is \textit{parabolic} if $ f_\gamma $ has a parabolic periodic point.
%Parabolic parameters are related to bifurcation phenomena and so well studied from the dynamical point of view. However, arithmetic properties are not 
Parabolic parameters are precisely the roots of the polynomials $ \Delta_{n,m}\;(m,n>0,\;m\mid n)$ defined in the following way.
Let $\cyc_k(x)$ denote the $k$-th cyclotomic polynomial.
For positive integers $m,n$ which satisfy $m \mid n$ and $m<n$, the \textit{delta factor} $\Delta_{n,m}$ is defined by the equation
\begin{equation}\label{eq: Delta}
    \Delta_{n, m}(c) \coloneqq \Res_x(\cyc_{n/m}(x), \delta_m(x, c)).
\end{equation}
In the case where $ m = n $, the delta factor $ \Delta_{n, n}(c) $ is defined by
\begin{equation}\label{eq: Delta_nn}
\delta_n(1, c) = \Delta_{n, n}(c) \prod_{m \mid n, m\neq n} \Delta_{n, m}(c).
\end{equation}
The list of $ \Delta_{n,m}$ can be found in \cite[Section 3, Table 1]{Morton-Vivaldi} or \cite[Appendix B]{MST24}.
It is known that
\begin{align}
    \deg_c \Delta_{n,m} = 
    \begin{cases}
        \nu(m) \varphi(n/m) & \text{ if } m \mid n, n \neq m, \\
        \nu(n) - \sum_{k|n, k\neq n} \nu(k)\varphi(n/k) & \text{ if } n = m,
    \end{cases} 
\end{align}
where $ \varphi(n) \coloneqq \abs{(\Z/n\Z)^\times} $ is the Euler's totient function and $ \nu(n) \coloneqq \sum_{k \mid n} 2^{k-1} \mu(n/k) $; see \cite[Corollary 3.3]{Morton-Vivaldi}.
Since the parabolic parameters are algebraic numbers, looking at their number-theoretic properties is natural.
For example, Buff--Koch determined all totally real parabolic parameters in \cite{BK22}, and Murakami--Sano--Takehira determined all quadratic parabolic parameters in \cite{MST24}.

Morton--Vivaldi conjectured the irreducibility of $\Delta_{n, m}$ over $ \Q $ in \cite{Morton-Vivaldi}.
This conjecture is also cited as an open problem in \cite[Exercise 4.12 (e)**]{Silbook00}.

\begin{conj}[The irreducibility conjecture \cite{Morton-Vivaldi}]\label{conj: irreducibility}
    For a family of the polynomial maps $f_c(z) = z^2+c$, the polynomials $\Delta_{n,m}$ for positive integers $m, n$ with $m \mid n$ are all irreducible over $\Q$.
\end{conj}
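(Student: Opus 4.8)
The plan is to split the conjecture into the non-diagonal case $m<n$ and the diagonal case $m=n$, and in the non-diagonal case to factor the resultant \eqref{eq: Delta} over a cyclotomic field. Put $q=n/m$. Since $\delta_m(x,c)$ is monic in $x$ (the right-hand side of its defining relation is visibly monic in $x$), the resultant $\Res_x(\cyc_q(x),\delta_m(x,c))$ factors over $\overline{\Q}$ as
\[
  \Delta_{n,m}(c)=\prod_{\zeta}\delta_m(\zeta,c),
\]
the product running over the primitive $q$-th roots of unity. When $q>1$ the individual factor $\delta_m(\zeta,c)\in\Q(\zeta)[c]$ is clean: its roots are exactly the parameters at which $f_c$ carries a genuine period-$m$ orbit of multiplier $\zeta$, because a period-$m$ orbit that degenerates to lower period forces its multiplier to be $1$ rather than $\zeta$. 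Since $\operatorname{Gal}(\Q(\zeta_q)/\Q)$ permutes the factors $\delta_m(\zeta,c)$ transitively, the standard block-system argument on their root sets gives the reduction: \emph{if $\Delta_{n,m}$ is squarefree and $\delta_m(\zeta_q,c)$ is irreducible over $\Q(\zeta_q)$, then $\Delta_{n,m}$ is irreducible over $\Q$}. Squarefreeness is a Gleason-type fact — the multiplier map restricts to an isomorphism on each hyperbolic component (Douady--Hubbard), so parabolic parameters occur as simple roots — and I would take it as known. The diagonal factor $\Delta_{n,n}$ is handled directly through \eqref{eq: Delta_nn}: it is the quotient of $\delta_n(1,c)$ by the lower factors $\Delta_{n,m}$ ($m\mid n$, $m<n$), so its irreducibility is again a statement about the specialization of $\delta_n$ at a root of unity, here at $x=1$ after the lower-period locus is removed.

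After this reduction everything rests on a single kind of assertion: for each $m$ and each $q$, the polynomial $\delta_m(\zeta_q,c)$ — respectively the primitive part of $\delta_n(1,c)$ — is irreducible over $\Q(\zeta_q)$, respectively over $\Q$. I would prove this in two stages. In the \emph{geometric stage} one shows that the plane curve $\{\delta_m(x,c)=0\}$ is irreducible over $\overline{\Q}$; this should follow from the known irreducibility of the dynatomic curves $\{\Phi_m^{*}(z,c)=0\}$ (Bousch, Morton), since $\{\delta_m=0\}$ is a quotient of the dynatomic curve by the order-$m$ dynamical automorphism. One then analyses the monodromy of the projection $x\colon\{\delta_m=0\}\to\mathbb{A}^{1}$ near the fibre over $\zeta_q$; transitivity of this monodromy, together with the tame behaviour of the multiplier near the centre of the relevant component, yields that $\delta_m(\zeta_q,c)$ is irreducible over $\overline{\Q}$, hence that its $\overline{\Q}$-factors are permuted transitively by $\operatorname{Gal}(\overline{\Q}/\Q(\zeta_q))$. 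In the \emph{arithmetic stage} one upgrades this to irreducibility over $\Q(\zeta_q)$, ruling out both a nontrivial factorization into conjugate factors and an unexpected constant subfield of the splitting field. Here I would run a Gleason-style $2$-adic argument: $\delta_m$ lies in $\Z[4c,x]$ and is monic in $4c$, and one expects the Newton polygon of $\delta_m(\zeta_q,c)$ in the variable $4c$ at the prime $2$ to consist of segments whose slopes have mutually coprime denominators, which simultaneously forbids a factorization over $\Q(\zeta_q)$ and pins down the ramification of $2$ in the splitting field. For $m\le 3$ this stage is explicit: $\delta_3$ is only quadratic in $x$, so $\delta_3(\zeta_q,c)$ is a concrete polynomial in $c$ over $\Q(\zeta_q)$ whose $2$-adic data (and $q$-adic data, where needed) can be controlled in closed form as $q$ varies, and that is what yields irreducibility of $\Delta_{3k,3}$ for all $k$; for sporadic $(n,m)$ with $\deg\Delta_{n,m}$ prime or with a single-segment Newton polygon at $2$, the Eisenstein-type half of the arithmetic stage already suffices, which gives the infinitely-many statement in higher periods.

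The step I expect to be the genuine obstacle is the uniformity in $m$ of the arithmetic stage. The geometric stage is soft and should go through for all $m$ from the existing irreducibility results for dynatomic curves. But passing from geometric to arithmetic irreducibility at the \emph{special} values $x=\zeta_q$ and $x=1$ cannot be done by Hilbert irreducibility — roots of unity are precisely the kind of exceptional points that may lie in the thin set on which specialization fails — so one needs an unconditional supply of bad primes. Producing, for every $m$, a prime $p=p(m)$ (morally $p=2$) together with an explicit description of the ramification of $p$ in the splitting field of $\delta_m$ amounts to a dynatomic ramification theorem strengthening Gleason's lemma from simplicity of the roots to transitivity of the inertia group. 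Proving such a statement uniformly in $m$ is the hard part, and its absence is the reason one can currently reach the full period-$3$ family together with infinitely many higher cases rather than the whole conjecture.
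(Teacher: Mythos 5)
There is a genuine gap, and it starts with the status of the statement itself: \cref{conj: irreducibility} is the Morton--Vivaldi conjecture, which remains open; the paper does not prove it, it only establishes the two partial results \cref{thm: irreducibility for period 3} and \cref{thm: irreducibility inherited}. Your text is a research program rather than a proof, and you say so yourself: the entire weight rests on the ``arithmetic stage'' (irreducibility of $\delta_m(\zeta_q,c)$ over $\Q(\zeta_q)$, uniformly in $m$ and $q$), which you explicitly flag as the unproved obstacle. Nothing in the proposal actually establishes that statement for even a single new pair $(m,q)$, so the conjecture is not proved. The reduction in your first paragraph (factor $\Delta_{n,m}$ over $\overline{\Q}$ as $\prod_\zeta \delta_m(\zeta,c)$, use Galois transitivity on the fiber together with separability and the divisibility of factor degrees by $\varphi(q)$) is sound and is essentially the paper's \cref{lem: parabolic parameter involves cyclotomic field} and the bijection \cref{eq: bijection between subsets of Xm and Z(Delta)}, but it is only the easy half.

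Moreover, the concrete mechanism you propose for the one family you claim to reach ($m=3$, all $q$) does not work. Specializing $\widetilde{\delta}_3(x,C)=C^3+8C^2+16C+64-(2C+16)x+x^2$ at $x=\zeta$ a root of unity, the constant term $64-16\zeta+\zeta^2$ is a $2$-adic unit (it is congruent to $\zeta^2$ modulo $2$) and the polynomial is monic in $C$, so its $2$-adic Newton polygon is a single horizontal segment: there is no Eisenstein-type or coprime-slope structure at $p=2$ to forbid a factorization over $\Q(\zeta_q)$, and the primes ramified in $\Q(\zeta_q)$ vary with $q$ with no uniform control. The paper's actual proof of \cref{thm: irreducibility for period 3} is entirely different and archimedean: it uses the genus-zero parametrization $t\mapsto(t^3-t^2+7t+1,\,-t^2-7)$ of the multiplier curve $X_3$ (\cref{lem: X3_parametrization}) to replace $\widetilde{\Delta}_{3k,3}$ by $\Gamma_k(t)=\cyc_k(t^3-t^2+7t+1)$, locates the roots of $\Gamma_k$ in the regions $A$ and $B$ by the argument principle (\cref{sec: distribution}), and then shows via \cref{lem: small totally real integers} (a Kronecker/Schinzel-type bound for totally real algebraic integers applied to $\alpha+\overline{\alpha}$) that $\Gamma_k$ has no root in $\Q^{\ab}$ (\cref{thm: delta 3 has no abelian roots}), which together with the $\varphi(k)$-divisibility of factor degrees forces irreducibility. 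Likewise the higher-period result comes not from Newton polygons but from the congruence $\widetilde{\Delta}_{mkp^e,m}\equiv\widetilde{\Delta}_{mk,m}^{\varphi(p^e)}\bmod p$ and the total ramification of $p$ in $\Q(\zeta_{kp^e})/\Q(\zeta_k)$, and it is conditional on irreducibility of $\widetilde{\Delta}_{mk,m}$ modulo some prime, not unconditional. So both the uniform arithmetic stage and the specific $2$-adic tool you lean on are missing or fail, and the conjecture remains unproved by your argument.
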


One can easily show the irreducibility of $\Delta_{n,m}$ for $m = 1, 2$ by reducing it to the irreducibility of cyclotomic polynomials; see \cite[Corollary 3.7]{Morton-Vivaldi}.
However, for $m = 3$ and beyond, this becomes a challenging problem.
\cref{fig: parabolic_parameters_of_period_3} shows the configuration of roots of $ 
\Delta_{3k,3}(c) (1\leq k \leq 79) $.
It is known that all parabolic parameters (red points in \cref{fig: parabolic_parameters_of_period_3}) lie on the boundary of the Mandelbrot set (the gray area of \cref{fig: parabolic_parameters_of_period_3}).

\begin{figure}[h]
    \includegraphics[width=15cm]{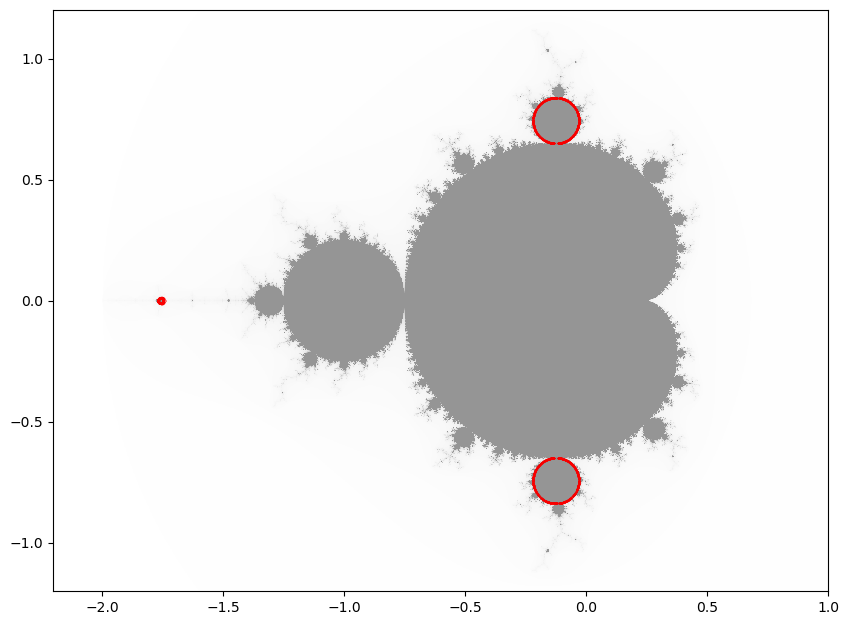}
    \caption{Parabolic parameters of period $3$}
    \label{fig: parabolic_parameters_of_period_3}
\end{figure}

This paper aims to give two partial solutions to \cref{conj: irreducibility}.
Our first main theorem is the irreducibility of the delta factors defining parabolic parameters of period $ 3 $.

\begin{thm}\label{thm: irreducibility for period 3}
    For any positive integer $k$ and $\ell$ with $(k,\ell)=1$, the polynomial $\Delta_{3k, 3}$ is irreducible over $ \Q(\zeta_{\ell}) $, where $\zeta_{\ell}$ is an $\ell$-th root of unity.
\end{thm}

\begin{rem}
    For positive integers $k,\ell, m$, if $(k,\ell)\neq 1$, the delta factor $\Delta_{km,m}$ is never irreducible over $\Q(\zeta_\ell)$ by definition.
\end{rem}

Recall that $\Delta_{n, m}(c)$ is an element of $\mathbb{Z}[4c]$.
Our second main theorem says that the irreducibility of $\widetilde{\Delta}_{mk,m}(C):=\Delta_{mk, m}(C/4)\in \mathbb{Z}[C]$
over $ \bbF_p$ implies the irreducibility of $\Delta_{mkp^e, m}$ 
over $ \bbQ $.

\begin{thm} \label{thm: irreducibility inherited}
	Let $ m \ge 1 $ and $ k \ge 2 $ be integers and $ p $ a prime number with $ p \nmid k $.
	If $ \widetilde{\Delta}_{mk,m}(C) $ is irreducible over $ \bbF_p $, then $ \Delta_{mk p^e, m} $ is irreducible over $ \Q $ for any $ e \ge 1 $.
\end{thm}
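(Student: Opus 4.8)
The plan is to exploit the factorization structure $\delta_m(1,c) = \prod_{d\mid m}\Delta_{m,d}(c)$ together with the behaviour of delta factors under the substitution $c\mapsto c^{p^e}$-type relations modulo $p$, and to bootstrap from the hypothesis in characteristic $p$ to characteristic $0$ via a Newton-polygon / Eisenstein-at-$p$ mechanism. More concretely, I would first recall from the excerpt that $\Delta_{mk,m}(c)=\Res_x(\Phi^{\mathrm{cyc}}_{k}(x),\delta_m(x,c))\in\Z[4c]$ and write $D_k(C):=\Delta_{mk,m}(C/4)\in\Z[C]$. The key algebraic identity I expect to need is a congruence of the form $D_{kp^e}(C)\equiv D_k(C)^{\varphi(p^e)}\pmod p$ (or more precisely $D_{kp^e}(C)\equiv D_k(C)^{p^{e-1}(p-1)}\bmod p$ when $p\nmid k$), which should follow from the corresponding congruence $\Phi^{\mathrm{cyc}}_{kp^e}(x)\equiv \Phi^{\mathrm{cyc}}_k(x)^{\varphi(p^e)}\pmod p$ for cyclotomic polynomials, pushed through the resultant $\Res_x(-,\delta_m(x,C/4))$. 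Granting this, the reduction $\overline{D_{kp^e}}$ is a perfect power of the irreducible polynomial $\overline{D_k}$ in $\mathbb{F}_p[C]$.

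The second ingredient is to control the ramification: I would show that $C=0$ (equivalently $c=0$) is a point where $D_{kp^e}$ acquires exactly the expected multiplicity, so that a single application of the Eisenstein-type criterion of Dumas/Sch\"onemann at the prime $p$ applies after a suitable change of variables. The mechanism is standard: if $\overline{D_{kp^e}}=\overline{D_k}^{\,N}$ with $\overline{D_k}$ irreducible of degree $d$ over $\mathbb{F}_p$, and if one can exhibit a single prime $\mathfrak{p}$ above $p$ in the splitting field whose ramification index in any hypothetical factor is forced to be a multiple of $N$, then $D_{kp^e}$ is irreducible over $\Q_p$, hence over $\Q$. Equivalently, I would argue that any monic factor $g\mid D_{kp^e}$ over $\Z_{(p)}$ reduces to a power $\overline{D_k}^{\,j}$ with $0\le j\le N$, and then use a valuation argument at a well-chosen specialization $C=C_0$ (a root of $\Delta_{mk,m}$, or the cusp $c=1/4$ corresponding to $\Delta_{m,m}$, or $c=0$) to pin down $j\in\{0,N\}$. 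The hypothesis $k\ge 2$ is presumably used exactly here to guarantee that such a rigidifying point exists and that $\Phi^{\mathrm{cyc}}_k$ is not the trivial polynomial $x-1$.

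For the valuation input I would use the known arithmetic of the multiplier: the constant term $\delta_m(1,0)$ and the leading behaviour of $\delta_m(x,c)$ in $4c$ (monic in $4c$, by Huguin and by \cite{MST24}) give precise $p$-adic valuations of the relevant resultants, and the factorization \eqref{eq: Delta_nn} lets me isolate $\Delta_{mk,m}$ from the other factors of $\delta_m(1,c)$, whose roots are disjoint for distinct $k$. Disjointness of the root sets of $\Delta_{mk,m}$ and $\Delta_{mk',m}$ for $k\ne k'$ follows because a periodic point cannot have two different primitive roots of unity as multiplier; this ensures the power $\overline{D_k}^{\,N}$ genuinely comes from a single delta factor and not from a collision with neighbouring factors.

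The main obstacle I anticipate is the ramification/rigidity step: proving that a hypothetical proper factor $g$ of $D_{kp^e}$ cannot reduce to $\overline{D_k}^{\,j}$ with $0<j<N$. The congruence alone only shows $\overline{D_{kp^e}}$ is a perfect power; it does not by itself prevent $D_{kp^e}$ from factoring into several pieces each reducing to a smaller power of $\overline{D_k}$. Overcoming this requires a genuine $p$-adic valuation computation — essentially showing the Newton polygon of $D_{kp^e}$ at $p$, after translating to a suitable root, has a single slope with denominator $N$ — and it is here that one must bring in quantitative information about $\delta_m$ (its discriminant at $p$, or the precise $p$-adic distances between the $k p^e$-th roots of unity feeding the resultant). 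I would model this on the classical proof that $\Phi^{\mathrm{cyc}}_{p^e}$ is Eisenstein at $p$, suitably twisted by the geometry of the multiplier map $c\mapsto \delta_m(x,c)$.
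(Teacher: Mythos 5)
Your first ingredient is correct and matches the paper exactly: the congruence $\widetilde{\Delta}_{mkp^e,m}(C)\equiv\widetilde{\Delta}_{mk,m}(C)^{\varphi(p^e)}\bmod p$ follows by pushing $\cyc_{kp^e}\equiv\cyc_k^{\varphi(p^e)}\bmod p$ through the resultant with $\widetilde{\delta}_m$, and this is precisely the paper's \cref{lem: mod_p_decomposition_Delta}. You have also correctly identified the genuine gap: the congruence only says the reduction is a perfect power; by itself it does not rule out $\widetilde{\Delta}_{mkp^e,m}$ splitting into several $\mathbb{Q}$-factors whose reductions are smaller powers of $\widetilde{\Delta}_{mk,m}\bmod p$. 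However, the tool you propose to close the gap — a Newton-polygon/Eisenstein computation at $p$, relying on ``quantitative information about $\delta_m$'' (its discriminant, or $p$-adic distances between roots of unity feeding the resultant) — is not what is needed, and you yourself flag that you have no proof it can be carried out. That route would require controlling the $p$-adic valuations of the delta factor at a root of $\widetilde{\Delta}_{mk,m}$ and, since $\widetilde{\Delta}_{mk,m}\bmod p$ is not linear, a nontrivial generalisation of the Newton polygon relative to an unramified extension. There is no reason to expect this to be uniform in $m$, $k$, $p$, $e$.

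The missing idea, which makes the argument essentially formal, is the dynamical input of \cref{lem: parabolic parameter involves cyclotomic field}: for any root $\gamma$ of $\widetilde{\Delta}_{mkp^e,m}$ one has $\mathbb{Q}(\gamma)\supset\mathbb{Q}(\zeta_{kp^e})$, a consequence of the uniqueness of the parabolic orbit (\cref{cor: uniqueness of parabolic orbit}). Given this, the paper's \cref{lem: mod_p_decomposition_irr_comp} is pure algebraic number theory: write a $\mathbb{Q}$-irreducible factor $f$ as $\prod_{\sigma\in\Gal(K/\mathbb{Q})}\sigma(h)$ with $K=\mathbb{Q}(\zeta_{kp^e})$ and $h$ the minimal polynomial of $\gamma$ over $K$; since $p\nmid k$, the prime $p$ is totally ramified in $K/\mathbb{Q}(\zeta_k)$, so the $\varphi(p^e)$ conjugates of $h$ over $\mathbb{Q}(\zeta_k)$ collapse to a single polynomial modulo the prime above $p$, forcing $f\equiv F^{\varphi(p^e)}\bmod p$ for some $F\in\overline{\mathbb{F}}_p[C]$. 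Comparing the number of distinct roots of $f$ in $\overline{\mathbb{F}}_p$ computed two ways then forces $\deg f\geq\deg\widetilde{\Delta}_{mkp^e,m}$. So the obstruction you worried about is resolved not by finer $p$-adic geometry of $\delta_m$ but by exploiting the cyclotomic structure of the splitting field; this is also where $k\geq2$ enters, not via a ``rigidifying point.'' You should prove (or cite) the analogue of \cref{lem: parabolic parameter involves cyclotomic field} and replace the Newton-polygon plan with the Galois/ramification argument above.
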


This theorem proves the existence of infinitely many irreducible delta factors for periods greater than $3$.
For example, it follows that the polynomials $\Delta_{8\cdot 11^e, 4}$ are irreducible for all $e\geq 1$.
See \cref{tab: irreducibility inherited} for more examples.

\subsection*{Organization of this paper}

This paper is organized as follows.
\cref{sec: preliminaries} is devoted to preparing some basic facts on complex dynamics (\cref{subsec: Dynamics}) and a plane curve defined by the multiplier polynomial (\cref{subsec: Multiplier curves}). In this section, a polynomial $ \Gamma_k $ is introduced, and we prove that $ \Delta_{3k,3} $ is irreducible if and only if $ \Gamma_k $ is irreducible.
In \cref{sec: distribution}, we estimate the configuration of roots of $ \Gamma_k $. More precisely, we prove some inequalities on $ \Re(\alpha) $ and $ |\alpha| $ for a root $ \alpha $ of $ \Gamma_k $.
A proof of \cref{thm: irreducibility for period 3}, our first main theorem, is given in \cref{sec: irreducibility}.
In \cref{sec: higher periods}, we prove \cref{thm: irreducibility inherited}, which is our second main theorem.

%% 書きかけ

% --------------------------------------------------------------------------

\section*{Acknowledgement}

% --------------------------------------------------------------------------

%\Kohei{お手すきの際に謝辞をご記入ください．}
The authors would like to thank Masanobu Kaneko, Yasuo Ohno, and Nobuo Tsuzuki for giving many helpful comments.
The first author is supported by JSPS KAKENHI Grant Number JP22J20698.
The second author is supported by JSPS KAKENHI Grant Number JP23KJ1675.
The fourth author is financially supported by JSPS KAKENHI Grant Number JP22J20227 and the WISE Program for AI Electronics, Tohoku University.

% --------------------------------------------------------------------------

\section{Preliminaries} \label{sec: preliminaries}

% --------------------------------------------------------------------------

% --------------------------------------------------------------------------

\subsection{Basic facts on dynamics and delta factors}
\label{subsec: Dynamics}

% --------------------------------------------------------------------------

This subsection recalls some basic facts on holomorphic dynamics and delta factors.
First, we review a well-known relationship between parabolic periodic points and critical points.

\begin{thm}[{cf \cite[Theorem 10.15]{Milnorbook06}}] \label{thm: parabolic_attracting_critical}
    Let $f$ be a rational map on $\bbP^1$ of degree $d\geq 2$.
    Let $\alpha \in \bbP^1(\mathbb{C})$ be a parabolic $m$-periodic point of $f$.
    Then, there is a critical point $\beta$ of $f$ satisfying
    \[
        \lim_{k\to\infty} f^{\circ (mk+i)}(\beta)= \alpha
    \]
    for some $0 \leq i \leq m-1$.
\end{thm}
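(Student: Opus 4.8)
The statement is classical---it is precisely \cite[Theorem~10.15]{Milnorbook06}---so in the paper one would just cite it; but let me record the shape of the proof I would give. First I would reduce to a parabolic \emph{fixed} point. Let $\lambda$ be the multiplier of the cycle containing $\alpha$, a root of unity by hypothesis, say a primitive $q$-th root of unity, and put $h := f^{\circ nq}$. Then $\alpha$ is a fixed point of $h$ with $h'(\alpha) = 1$, and since $\deg h \geq 2$ the map $h$ is not locally the identity, so $h(z) = \alpha + (z - \alpha) + a(z-\alpha)^{\nu+1} + \cdots$ near $\alpha$ with $a \neq 0$, $\nu \geq 1$. By the chain rule a point $w$ is a critical point of $h$ if and only if $f^{\circ j}(w)$ is a critical point of $f$ for some $0 \le j < nq$; and $h^{\circ m}(w) \to \alpha$ if and only if $f^{\circ nqm}(w) \to \alpha$. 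So it suffices to exhibit a critical point of $h$ in the parabolic basin $\mathcal{A}$ of $\alpha$---the union of those Fatou components of $f$ on which $h^{\circ m}$ converges to $\alpha$---and then to keep track of indices at the end.

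Next I would invoke the Leau--Fatou flower theorem: $\alpha$ carries an attracting petal $P$ with $h(P) \subseteq P$ and $h^{\circ m}|_P \to \alpha$, together with a Fatou coordinate, i.e.\ a biholomorphism $\phi\colon P \to \{\, w : \Re w > R \,\}$ with $\phi(h(z)) = \phi(z) + 1$ for $z \in P$. Note that $P \subseteq \mathcal{A}$, that $\mathcal{A}$ is open and avoids the Julia set $J(f)$, and---crucially---that $\mathcal{A}$ is backward invariant under $h$: if $h(w') \in \mathcal{A}$ then $w' \in \mathcal{A}$. Consequently, if $\mathcal{A}$ contained no critical point of $h$ it would contain no critical \emph{value} of $h$ either, since $h(c) \in \mathcal{A}$ would force $c \in \mathcal{A}$.

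Then comes the substantive step. Assume, for contradiction, that $\mathcal{A}$ contains no critical point of $h$. Beginning with $\phi^{-1}\colon \{\Re w > R\} \to P$ and using $\phi^{-1}(w) = h^{-1}(\phi^{-1}(w+1))$ with a holomorphic branch of $h^{-1}$---available because the values lie in $\mathcal{A}$, which carries no critical value of $h$---I would analytically continue $\phi^{-1}$ across the increasing family of simply connected half-planes $\{\Re w > R - k\}$, the monodromy theorem guaranteeing single-valuedness. The result is a non-constant meromorphic map $\psi\colon \mathbb{C} \to \bbP^1$ with $\psi(\mathbb{C}) \subseteq \mathcal{A}$. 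But $\mathcal{A}$ omits $J(f)$, which is infinite since $\deg f \geq 2$, so $\psi$ omits at least three values, contradicting Picard's theorem. Hence $\mathcal{A}$ contains a critical point $w_0$ of $h$. Setting $\beta := f^{\circ j}(w_0)$ with $f'(\beta) = 0$ and $0 \le j < nq$, we get $f^{\circ nqm}(\beta) = f^{\circ j}(f^{\circ nqm}(w_0)) \to f^{\circ j}(\alpha) =: \alpha_1$, a point of the $n$-cycle of $\alpha$; then for $m = qm' + s$ with $0 \le s < q$ one has $f^{\circ(nm+i)}(\beta) = f^{\circ(ns+i)}\!\bigl(f^{\circ nqm'}(\beta)\bigr) \to f^{\circ(ns+i)}(\alpha_1) = f^{\circ i}(\alpha_1)$ as $m' \to \infty$, using $f^{\circ ns}(\alpha_1) = \alpha_1$. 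Choosing $i \in \{0,\dots,n-1\}$ with $i + j \equiv 0 \pmod{n}$ yields $f^{\circ i}(\alpha_1) = f^{\circ(i+j)}(\alpha) = \alpha$, so $f^{\circ(nm+i)}(\beta) \to \alpha$.

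For a genuinely self-contained account the hard part is the Leau--Fatou flower theorem itself (the existence of the petal and of the univalent Fatou coordinate); granting that, the only delicate point is the backward-invariance observation that upgrades ``no critical point in $\mathcal{A}$'' to ``no critical value in $\mathcal{A}$'', which is exactly what keeps the analytic continuation unobstructed so that Picard's theorem can close the argument.
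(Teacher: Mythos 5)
The paper does not prove this statement: it is quoted verbatim (modulo reformulation) from Milnor, with the citation \cite[Theorem 10.15]{Milnorbook06}, and is used as a black box to derive \cref{cor: uniqueness of parabolic orbit}. Your sketch is the standard Leau--Fatou/Milnor argument---reduce to a multiplier-$1$ fixed point of $h=f^{\circ nq}$, take a petal with Fatou coordinate, use backward-invariance of the basin plus ``no critical value'' to analytically continue $\phi^{-1}$ to all of $\mathbb{C}$, and contradict Picard---and it is correct, including the slightly fiddly bookkeeping at the end recovering $i$ from $j$ modulo $n$ and checking that the limit is independent of the residue $s$ of $m$ modulo $q$. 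Since the paper only cites the result, there is nothing to compare approaches against; your version is precisely the proof one would give if one did not wish to cite Milnor.
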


Since a critical point of $ f_c(z) = z^2 + c $ is either $ 0 $ or $ \infty $, we can easily deduce the following corollary.

\begin{cor}\label{cor: uniqueness of parabolic orbit}
    For a complex number $ c \in \mathbb{C} $, the polynomial $ f_c(z) = z^2 + c $ has at most one parabolic periodic orbit. 
\end{cor}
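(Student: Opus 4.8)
The plan is to deduce this immediately from \cref{thm: parabolic_attracting_critical} by exploiting the fact that $f_c$ has essentially only one critical point whose orbit is not already pinned down. First I would identify the critical points: since $f_c'(z)=2z$, the critical points of $f_c$ on $\bbP^1$ are $0$ and $\infty$. The point $\infty$ is a fixed point of $f_c$ with multiplier $0$ (it is superattracting, as $f_c$ has degree $2$), so $\infty$ is not a parabolic periodic point, and moreover $f_c^{\circ k}(\infty)=\infty$ for all $k$, so the forward orbit of $\infty$ does not converge to any finite point of $\mathbb{C}$. Hence $\infty$ cannot be the critical point $\beta$ produced by \cref{thm: parabolic_attracting_critical} for a (necessarily finite) parabolic periodic point.

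Next I would argue by contradiction. Suppose $f_c$ has two distinct parabolic periodic orbits $O_1$ and $O_2$, say $\alpha_j\in O_j$ is a parabolic periodic point of period $n_j$ for $j=1,2$. Applying \cref{thm: parabolic_attracting_critical} to each $\alpha_j$, there is a critical point $\beta_j$ and an index $0\le i_j\le n_j-1$ with $\lim_{m\to\infty} f_c^{\circ(n_j m+i_j)}(\beta_j)=\alpha_j$. By the previous paragraph $\beta_j\neq\infty$, so $\beta_1=\beta_2=0$. Thus the single point $0$ is attracted to both $O_1$ and $O_2$.

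Finally I would upgrade this to a contradiction. From $f_c^{\circ(n_1 m+i_1)}(0)\to\alpha_1$ and continuity of $f_c$, one gets $f_c^{\circ(n_1 m+i_1+r)}(0)\to f_c^{\circ r}(\alpha_1)$ for every $r$, so $\operatorname{dist}\!\big(f_c^{\circ k}(0),O_1\big)\to 0$ as $k\to\infty$; likewise $\operatorname{dist}\!\big(f_c^{\circ k}(0),O_2\big)\to 0$. Since $O_1$ and $O_2$ are disjoint finite (hence positively separated) subsets of $\mathbb{C}$, a single sequence cannot approach both, a contradiction. Therefore $f_c$ has at most one parabolic periodic orbit.

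I do not expect a serious obstacle here; the proof is short once \cref{thm: parabolic_attracting_critical} is in hand. The only two points requiring a little care are (i) genuinely ruling out $\infty$ as the attracted critical point (which reduces to the standard fact that $\infty$ is superattracting for a polynomial of degree $\ge 2$), and (ii) passing from the subsequential convergence in the statement of \cref{thm: parabolic_attracting_critical} to convergence of the whole forward orbit of $0$ to the cycle, which is just continuity together with periodicity of $\alpha_j$.
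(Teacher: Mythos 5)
Your proof is correct and follows exactly the route the paper has in mind: the paper offers only the one-line remark that the critical points of $f_c$ are $0$ and $\infty$ and then says the corollary follows ``easily.'' You have supplied the missing details (ruling out $\infty$ because it is superattracting and fixed, and showing a single orbit cannot converge to two disjoint cycles) in a clean and correct way.
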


Recall that the multiplier polynomial $\delta_m$ is an element of $\mathbb{Z}[x, 4c]$ and is monic in $4c$.
We define a polynomial $\widetilde{\delta}_m\in \mathbb{Z}[x, C]$ by $\widetilde{\delta}_m(x, C) = \delta_m(x, C/4)$.
For $m, n>0$ with $m\mid n$, we define a polynomial $\widetilde{\Delta}_{n, m}$ by $\widetilde{\Delta}_{n, m}(C) = \Delta_{n,m}(C/4)$.
It follows that $\widetilde{\Delta}_{n, m}$ is a monic polynomial in $\mathbb{Z}[C]$.

\begin{prop}
    For any $m, n > 0$ with $m \mid n$, the polynomial $\widetilde{\Delta}_{n, m}$ is separable.
\end{prop}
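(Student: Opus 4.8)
The plan is to reduce the separability of $\widetilde{\Delta}_{n,m}$ to a squarefreeness statement for the one‑variable polynomials obtained from $\delta_m$ by setting the multiplier equal to a root of unity, and then to settle that using \cref{cor: uniqueness of parabolic orbit} together with the local structure of the multiplier at a parabolic cycle. First, since $\widetilde{\Delta}_{n,m}(C)=\Delta_{n,m}(C/4)$ differs from $\Delta_{n,m}$ only by the invertible substitution $c=C/4$, it suffices to prove that $\Delta_{n,m}\in\Q[c]$ has no repeated complex root. Expanding the resultant in \eqref{eq: Delta} and using that $\cyc_{n/m}$ is monic gives the factorization $\Delta_{n,m}(c)=\prod_{\zeta}\delta_m(\zeta,c)$ with $\zeta$ running over the primitive $(n/m)$‑th roots of unity; and for $m=n$ the defining relation \eqref{eq: Delta_nn} exhibits $\Delta_{n,n}$ as a divisor of $\delta_n(1,c)$. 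Hence the Proposition follows once we establish: (i) $\delta_m(\zeta,c)$ has no repeated root, for every $m\ge 1$ and every root of unity $\zeta$; and (ii) $\delta_m(\zeta,c)$ and $\delta_m(\zeta',c)$ are coprime, for every $m\ge 1$, every $r\ge 1$, and all distinct primitive $r$‑th roots of unity $\zeta\ne\zeta'$.

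Assertion (ii) is immediate from \cref{cor: uniqueness of parabolic orbit}: a root $c_0$ of $\delta_m(\zeta,c)$ is a parameter for which $f_{c_0}$ has a periodic cycle among the zeros of $\Phi_m^{*}({-},c_0)$ with multiplier $\zeta$, and this cycle is parabolic because $\zeta$ is a root of unity; so a common root of $\delta_m(\zeta,\cdot)$ and $\delta_m(\zeta',\cdot)$ with $\zeta\ne\zeta'$ would give $f_{c_0}$ two parabolic cycles, distinguished by their multipliers, contradicting \cref{cor: uniqueness of parabolic orbit}. The same reasoning shows that at a root $c_0$ of $\delta_m(\zeta,c)$ the map $f_{c_0}$ has a unique parabolic cycle $\alpha$, and that $\alpha$ is the only periodic cycle of $f_{c_0}$ with multiplier $\zeta$. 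Consequently, writing $\delta_m(x,c)=\prod_O(x-\omega_m(O))$ over the period‑$m$ cycles $O$, a repeated root of $\delta_m(\zeta,c)$ at $c_0$ — equivalently $\partial_c\delta_m(\zeta,c_0)=0$ — can only reflect how the single cycle $\alpha$, together with the period‑$m$ cycles of $f_c$ near it for $c$ near $c_0$, behaves under the bifurcation at $c_0$; geometrically, it says that the multiplier curve $\{\delta_m(x,c)=0\}$ fails to meet the line $\{x=\zeta\}$ transversally at $(\zeta,c_0)$.

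Proving this transversality is the main obstacle. For $m<n$ (so $\zeta\ne 1$) the situation is the simplest: the parabolic cycle $\alpha$ has exact period $m$, and since its multiplier $\zeta$ is $\ne 1$ it is a \emph{simple} zero of $\Phi_m^{*}({-},c_0)$; hence $\alpha$ persists as an analytic family of period‑$m$ cycles for $c$ near $c_0$, with analytic multiplier $\rho(c)$ and $\rho(c_0)=\zeta$, so that the multiplier curve is locally the graph $\{x=\rho(c)\}$ and the required transversality is just $\rho'(c_0)\ne 0$. For $m=n$ one must also treat $\zeta=1$: besides the analytic case above (which occurs when $\alpha$ has period a proper divisor of $n$ and multiplier a nontrivial root of unity), there is the case where $\alpha$ has exact period $n$ and multiplier exactly $1$; since $f_c$ has a single free critical point, $\alpha$ then has parabolic multiplicity one, so it is a \emph{double} zero of $\Phi_n^{*}({-},c_0)$, two cycles merge at $c_0$, and one must show their common multiplier has a genuine square‑root branch point there — again the transversality of the multiplier curve to $\{x=1\}$ at $(1,c_0)$. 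In every case this is the standard non‑degeneracy of the parabolic bifurcation for $z\mapsto z^2+c$, equivalent to Douady--Hubbard's theorem that the multiplier map on a hyperbolic component is a conformal isomorphism with controlled behaviour at its root (cf.\ \cite{Milnorbook06} and the references therein). Granting it, $\partial_c\delta_m(\zeta,c_0)\ne 0$ at every root $c_0$, whence $\delta_m(\zeta,c)$ is squarefree; this gives (i), and together with (ii) the Proposition follows.
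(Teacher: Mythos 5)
Your argument is sound in outline but takes a genuinely different route from the paper's, which is essentially two lines: by \cite[Proposition~3.2]{Morton-Vivaldi} the polynomial $\widetilde{\delta}_n(1,C)$ is separable, and by \eqref{eq: Delta_nn} it equals $\prod_{m\mid n}\widetilde{\Delta}_{n,m}(C)$, so each factor $\widetilde{\Delta}_{n,m}$ is separable. You instead decompose $\Delta_{n,m}$ (for $m<n$) as $\prod_{\zeta}\delta_m(\zeta,c)$ over primitive $(n/m)$-th roots of unity $\zeta$ (and use $\Delta_{n,n}\mid\delta_n(1,c)$ for $m=n$), derive pairwise coprimality of the factors from \cref{cor: uniqueness of parabolic orbit}, and reduce squarefreeness of each $\delta_m(\zeta,c)$ to transversality of the multiplier curve with the line $\{x=\zeta\}$, which you attribute to the non-degeneracy of the parabolic bifurcation for quadratics (Douady--Hubbard). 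Both proofs thus rest on an external input; the paper's reference is precisely the statement it needs and is immediately applicable, whereas you invoke a heavier, more general theorem of holomorphic dynamics and leave the translation of it into the inequality $\partial_c\delta_m(\zeta,c_0)\neq 0$ --- which you yourself call ``the main obstacle'' --- as granted rather than carried out. The coprimality argument via the uniqueness of the parabolic cycle is correct and a nice observation, and your dichotomy between the satellite case (cycle of smaller exact period, analytic multiplier branch) and the primitive case (exact period $n$, multiplier $1$, square-root branch point) is accurate. In exchange for more insight into \emph{why} separability holds, your route is considerably longer and not fully self-contained.
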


\begin{proof}
    By \cite[Proposition 3.2]{Morton-Vivaldi}, the polynomial $\widetilde{\delta}_n(1, C)$ is separable.
    Since we have
    $$
    \widetilde{\delta}_n(1, C) = \prod_{m \mid n} \widetilde{\Delta}_{n, m}(C),
    $$
    the polynomials $\widetilde{\Delta}_{n, m}$ are also separable.
\end{proof}

% --------------------------------------------------------------------------

\subsection{Multiplier curves}
\label{subsec: Multiplier curves}

% --------------------------------------------------------------------------
Throughout this section, for a polynomial $F$ of complex coefficients, we write $Z(F)$ for the set of complex roots of $F$ without counting multiplicity.

To study parabolic parameters, we introduce the following geometric object.
\begin{dfn}
    We define the \emph{multiplier curve} $X_m$ to be the curve in $\mathbb{A}^2_{\mathbb{Q}}$ defined by $\widetilde{\delta}_m(x,C)=0$.
\end{dfn}

It is known that the curve $X_m$ is geometrically integral \cite[Corollary 1]{Morton96}.
Let $k\geq 2$ be an integer.
By definition, the roots of $\widetilde{\Delta}_{mk,m}$ are the numbers $\gamma\in \overline{\mathbb{Q}}$ such that $\widetilde{\delta}_m(\zeta,\gamma)=0$ for some primitive $k$-th root of unity $\zeta$.
Since we have the uniqueness of parabolic periodic orbits by \cref{cor: uniqueness of parabolic orbit}, such $\zeta$ is uniquely determined by $\gamma$.
In other words, we have a bijection
\begin{align}\label{eq: bijection between subsets of Xm and Z(Delta)}
    \{(\zeta,\gamma)\in X_m(\overline{\mathbb{Q}})\mid \cyc_k(\zeta)=0\}\xlongrightarrow{\sim}Z(\widetilde{\Delta}_{mk,m});\quad (\zeta,\gamma)\mapsto \gamma.
\end{align}
which is Galois-equivariant, i.e., compatible with $\Gal(\overline{\Q}/\Q)$-action.
In particular, the polynomial $\widetilde{\Delta}_{mk,m}$ is irreducible over $\mathbb{Q}$ if and only if $\Gal(\overline{\mathbb{Q}}/\mathbb{Q})$ acts transitively on the set $\{(\zeta,\gamma)\in X_m(\overline{\mathbb{Q}})\mid \cyc_k(\zeta)=0\}$.

\begin{lem}\label{lem: parabolic parameter involves cyclotomic field}
        Let $k\geq 2$ be an integer.
        For any $\gamma\in Z(\widetilde{\Delta}_{mk,m})$, the field $\Q(\gamma)$ contains the cyclotomic field $\Q(\zeta_k)$.
        In particular, the degree of an irreducible factor of $\widetilde{\Delta}_{mk,m}$ over $\mathbb{Q}$ is divisible by $\varphi(k)$.
\end{lem}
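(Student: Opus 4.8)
The plan is to deduce the lemma formally from the Galois-equivariance of the bijection \eqref{eq: bijection between subsets of Xm and Z(Delta)} together with the uniqueness of parabolic orbits (\cref{cor: uniqueness of parabolic orbit}). Fix $\gamma\in Z(\widetilde{\Delta}_{mk,m})$. By surjectivity of the map in \eqref{eq: bijection between subsets of Xm and Z(Delta)} there is a primitive $k$-th root of unity $\zeta$ with $(\zeta,\gamma)\in X_m(\overline{\mathbb{Q}})$, i.e. $\widetilde{\delta}_m(\zeta,\gamma)=0$, and by injectivity this $\zeta$ is the unique primitive $k$-th root of unity with this property.

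The next step is to let $\sigma\in\Gal(\overline{\mathbb{Q}}/\mathbb{Q})$ be arbitrary with $\sigma\gamma=\gamma$, and apply $\sigma$ to the relation $\widetilde{\delta}_m(\zeta,\gamma)=0$. Since $\widetilde{\delta}_m\in\mathbb{Z}[x,C]$ (as recalled before the lemma), we get $\widetilde{\delta}_m(\sigma\zeta,\gamma)=\sigma\!\left(\widetilde{\delta}_m(\zeta,\gamma)\right)=0$; moreover $\sigma\zeta$ is again a primitive $k$-th root of unity, so $(\sigma\zeta,\gamma)$ lies in the left-hand set of \eqref{eq: bijection between subsets of Xm and Z(Delta)}. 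Uniqueness of $\zeta$ then forces $\sigma\zeta=\zeta$. Hence every element of $\Gal(\overline{\mathbb{Q}}/\mathbb{Q}(\gamma))$ fixes $\zeta$, which gives $\mathbb{Q}(\zeta_k)=\mathbb{Q}(\zeta)\subseteq\mathbb{Q}(\gamma)$. For the last assertion, if $g\in\mathbb{Q}[C]$ is an irreducible factor of $\widetilde{\Delta}_{mk,m}$ and $\gamma$ a root of $g$, then $\deg g=[\mathbb{Q}(\gamma):\mathbb{Q}]$ is divisible by $[\mathbb{Q}(\zeta_k):\mathbb{Q}]=\varphi(k)$ by the tower law.

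I do not expect a genuine obstacle here: the statement is essentially a repackaging of \cref{cor: uniqueness of parabolic orbit}, and the only point that warrants care is making the Galois action explicit, namely that the rationality (indeed integrality) of the coefficients of $\widetilde{\delta}_m$ lets $\sigma$ move the point $(\zeta,\gamma)$ within $X_m(\overline{\mathbb{Q}})$. All the real work needed for this lemma has already been done in establishing the uniqueness of the parabolic orbit and the well-definedness of the bijection \eqref{eq: bijection between subsets of Xm and Z(Delta)}.
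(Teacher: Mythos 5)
Your proof is essentially identical to the paper's: both deduce from the Galois-equivariant bijection \eqref{eq: bijection between subsets of Xm and Z(Delta)} that the unique primitive $k$-th root of unity $\zeta$ with $(\zeta,\gamma)\in X_m(\overline{\mathbb{Q}})$ is fixed by $\Gal(\overline{\mathbb{Q}}/\mathbb{Q}(\gamma))$, whence $\mathbb{Q}(\zeta_k)\subseteq\mathbb{Q}(\gamma)$. The only difference is cosmetic: you unwind the Galois-equivariance by appealing directly to the integrality of the coefficients of $\widetilde{\delta}_m$, and you spell out the tower-law step for the degree statement, both of which the paper leaves implicit.
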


\begin{proof}
        By the existence of the bijection \cref{eq: bijection between subsets of Xm and Z(Delta)}, there is a unique primitive $k$-th root of unity $\zeta$ such that $(\zeta,\gamma)\in X_m(\overline{\mathbb{Q}})$.
        If $\sigma\in \Gal(\overline{\mathbb{Q}}/\mathbb{Q}(\gamma))$, then $(\sigma(\zeta),\sigma(\gamma))=(\sigma(\zeta),\gamma)$ is also an element of $X_m(\overline{\mathbb{Q}})$.
        By the uniqueness of $\zeta$, we have $\zeta=\sigma(\zeta)$.
        This implies $\mathbb{Q}(\zeta)\subset \mathbb{Q}(\gamma)$ by Galois theory.
\end{proof}

Now, we focus on the case of $m=3$.
The multiplier curve $X_3$ is given by
\[
    \widetilde{\delta}_3(x, C) = x^2 - (2C + 16)x + (C^3 + 8C^2 + 16C + 64) = 0.
\]
It is rational since this is a cubic curve with a nodal singular point $(x, C)=(8, 0)$.
\begin{lem}\label{lem: X3_parametrization}
    The normalization of $X_3$ is given by
    \[
        \psi\colon \mathbb{A}^1_{\mathbb{Q}}\longrightarrow X_3;\quad t\mapsto (t^3-t^2+7t+1, -t^2-7).
    \]
\end{lem}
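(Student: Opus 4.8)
The plan is to realize $\varphi$ as a \emph{finite birational} morphism onto $X_3$, and then to quote the fact that a finite birational morphism from a normal integral scheme is the normalization. So there are really only three things to check: that $\varphi$ maps into $X_3$, that it is birational, and that it is finite.

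First I would verify that $\varphi$ lands on $X_3$, i.e.\ that $\widetilde{\delta}_3(t^3-t^2+7t+1,\,-t^2-7)=0$ identically in $t$. This is a routine polynomial identity; the way one \emph{finds} the parametrization is to solve the quadratic $\widetilde{\delta}_3=0$ for $x$, obtaining $x=(C+8)\pm\sqrt{(C+8)^2-(C^3+8C^2+16C+64)}=(C+8)\pm\sqrt{-C^2(C+7)}$, and to observe that the substitution $C=-t^2-7$ turns $-C^2(C+7)$ into the square $(Ct)^2$; then $x=(C+8)-Ct=t^3-t^2+7t+1$. Thus $\varphi\colon\mathbb{A}^1_{\mathbb{Q}}\to X_3$ is a well-defined morphism of affine curves.

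Next, birationality and finiteness. Using $t^2=-C-7$ one rewrites $x=t\cdot t^2-t^2+7t+1=-Ct+C+8$, so that $t=(C+8-x)/C$ on the open locus $\{C\neq 0\}$; hence $(x,C)\mapsto (C+8-x)/C$ is a rational inverse of $\varphi$, and $\varphi^{*}$ identifies the function field $\mathbb{Q}(X_3)=\operatorname{Frac}\!\big(\mathbb{Q}[x,C]/(\widetilde{\delta}_3)\big)$ with $\mathbb{Q}(t)$. Since $\mathbb{Q}[x,C]/(\widetilde{\delta}_3)$ is a domain and $\varphi$ is dominant, $\varphi^{*}$ moreover identifies $\mathcal{O}(X_3)$ with a subring $R\subseteq\mathbb{Q}[t]$ that contains the image $C=-t^2-7$ of the coordinate $C$; the monic relation $t^2+(C+7)=0$ exhibits $t$ as integral over $R$, and as $\mathbb{Q}[t]=R[t]$ is generated over $R$ by the single integral element $t$, it is a finite $R$-module. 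Therefore $\varphi$ is a finite morphism.

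Finally, $\mathbb{A}^1_{\mathbb{Q}}$ is normal and $X_3$ is integral (indeed geometrically integral, by \cite[Corollary~1]{Morton96}), so by the universal property of the normalization $\nu\colon\widetilde{X_3}\to X_3$ the morphism $\varphi$ factors as $\varphi=\nu\circ g$ for a unique $g$; then $g$ is finite (because $\nu$ is separated and $\nu\circ g=\varphi$ is finite) and has degree $1$ (because $\deg\varphi=\deg g\cdot\deg\nu=\deg g$), hence is an isomorphism of normal integral curves, and $\varphi$ is the normalization. The one point to be careful about — and the reason Step~3 is not superfluous — is that birationality of $\varphi$ together with smoothness of $\mathbb{A}^1_{\mathbb{Q}}$ does \emph{not} by itself force $\varphi$ to be the normalization: a birational morphism of smooth curves can be an open immersion omitting finitely many points, and here one should note in particular that the two conjugate preimages $t=\pm\sqrt{-7}$ of the node $(8,0)$ really are hit. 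The only computational labor is the identity in Step~1, which I would leave as a direct verification.
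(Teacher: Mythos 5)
Your proof is correct and follows the same high-level strategy as the paper: exhibit $\varphi$ as a finite birational morphism and then invoke normality of $\mathbb{A}^1_{\mathbb{Q}}$. The implementations of the two middle steps differ in flavor rather than in substance: the paper counts $\overline{\mathbb{Q}}$-points in the fibers ($\#\varphi^{-1}(x,C)=1$ except $=2$ over the node $(8,0)$) and reads off both finiteness and birationality from that, whereas you produce an explicit rational inverse $t=(C+8-x)/C$ for birationality and use the monic relation $t^2+(C+7)=0$ to show $\mathbb{Q}[t]$ is integral, hence module-finite, over $\mathcal{O}(X_3)$. Your integrality argument is arguably the more self-contained way to verify finiteness (the paper's jump from ``fibers are finite and nonempty'' to ``$\varphi$ is finite'' silently relies on Zariski's main theorem or an equivalent fact about quasi-finite dominant maps of affine curves); conversely, the paper's fiber count makes the geometry of the node more transparent, which is the point you quite rightly flag in your closing remark about why birationality alone does not suffice.
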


\begin{proof}
    A direct computation shows that $\psi$ is well-defined.
    For each $(x, C)\in X_3(\overline{\mathbb{Q}})$, we have
    \[
        \#\psi^{-1}(x, C) =
        \begin{cases}
            1 & \text{if }(x, C)\neq(8, 0),\\
            2 & \text{if }(x, C)=(8, 0).
        \end{cases}
    \]
    This observation shows that $\psi$ is finite and birational.
    Since $\mathbb{A}^1_\mathbb{Q}$ is normal, this map gives the normalization of $X_3$.
\end{proof}

\begin{rem}
    The parametrization of $X_3$ given in \cref{lem: X3_parametrization} is equivalent to the one provided in \cite{GF95} via the coordinate change $t\mapsto - 2\Omega - 1$.
\end{rem}

\begin{dfn}
    Let $k\geq 2$ be an integer.
    We define a monic polynomial $\Gamma_k\in \mathbb{Q}[t]$ by
    \[
        \Gamma_k(t) = \cyc_k(t^3 - t^2 + 7t + 1).
    \]
    Note that $\deg \Gamma_k = 3\varphi(k) = \deg \widetilde{\Delta}_{3k,3}$ and $\Gamma_k(0) = \cyc_k(1) \neq 0$.
\end{dfn}

\begin{lem}\label{lem: gamma_vs_delta}
    Let $k\geq 2$ be an integer.
    We have a Galois-equivariant bijection
    \begin{align}\label{eq: bijection between roots of gamma and delta}
        Z(\Gamma_k)\xlongrightarrow{\sim} Z(\widetilde{\Delta}_{3k,3});\quad \alpha \mapsto -\alpha^2 - 7.
    \end{align}
    In particular, the polynomial $\widetilde{\Delta}_{3k,3}$ is irreducible over $\mathbb{Q}$ if and only if $\Gamma_k$ is irreducible over $\mathbb{Q}$.
\end{lem}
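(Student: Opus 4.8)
The plan is to realize the map $\alpha \mapsto -\alpha^2-7$ as the composite of the normalization $\varphi$ of \cref{lem: X3_parametrization}, restricted to $Z(\Gamma_k)$, with the Galois-equivariant bijection \eqref{eq: bijection between subsets of Xm and Z(Delta)} in the case $m=3$. Write $g(t) = t^3 - t^2 + 7t + 1$, so that $\Gamma_k = \cyc_k \circ g$ and $\varphi(t) = (g(t),\, -t^2-7)$, and put $S_k \coloneqq \{(\zeta,\gamma) \in X_3(\overline{\mathbb{Q}}) \mid \cyc_k(\zeta)=0\}$, which is the source of \eqref{eq: bijection between subsets of Xm and Z(Delta)} with $m=3$.

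First I would verify that $\psi\colon \alpha \mapsto \varphi(\alpha)$ is a well-defined, $\Gal(\overline{\mathbb{Q}}/\mathbb{Q})$-equivariant map $Z(\Gamma_k) \to S_k$: if $\Gamma_k(\alpha)=0$ then $\cyc_k(g(\alpha))=0$, so $\varphi(\alpha) \in X_3(\overline{\mathbb{Q}})$ has first coordinate a primitive $k$-th root of unity, and equivariance is clear because $\varphi$ is defined over $\mathbb{Q}$. Next I would show $\psi$ is bijective. Surjectivity is immediate from surjectivity of the normalization map $\varphi\colon \mathbb{A}^1 \to X_3$: a preimage $\alpha$ of $(\zeta,\gamma) \in S_k$ automatically satisfies $g(\alpha)=\zeta$, hence $\Gamma_k(\alpha)=\cyc_k(\zeta)=0$. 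For injectivity, \cref{lem: X3_parametrization} says that $\varphi$ is injective away from the node $(8,0)$; but $(8,0)$ never arises as $\varphi(\alpha)$ with $\alpha \in Z(\Gamma_k)$, since its first coordinate $8$ is not a root of unity whereas $g(\alpha)$ always is. Composing $\psi$ with \eqref{eq: bijection between subsets of Xm and Z(Delta)} then produces exactly the map $\alpha \mapsto -\alpha^2-7$, which is therefore a Galois-equivariant bijection $Z(\Gamma_k) \xrightarrow{\sim} Z(\widetilde{\Delta}_{3k,3})$.

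For the ``in particular'' clause, I would first record that $\Gamma_k$ is separable: from the bijection just constructed together with the separability of $\widetilde{\Delta}_{3k,3}$ proved above, $\#Z(\Gamma_k) = \#Z(\widetilde{\Delta}_{3k,3}) = \deg \widetilde{\Delta}_{3k,3} = 3\varphi(k) = \deg \Gamma_k$, so $\Gamma_k$ has no repeated roots. Since $\Gamma_k$ and $\widetilde{\Delta}_{3k,3}$ are then both monic separable polynomials over $\mathbb{Q}$, each is irreducible if and only if $\Gal(\overline{\mathbb{Q}}/\mathbb{Q})$ acts transitively on its set of roots, and a Galois-equivariant bijection between these root sets transports transitivity in both directions; this yields the claimed equivalence. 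The whole argument is essentially bookkeeping, and I do not expect a serious obstacle: the only delicate point is disposing of the node $(8,0)$, the unique place where $\varphi$ fails to be injective, which is handled at once by noting that $8$ is not a root of unity, together with the short separability check, which is precisely what upgrades ``transitive Galois action'' to ``irreducible''.
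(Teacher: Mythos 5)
Your proof is correct and follows essentially the same route as the paper: realize the map $\alpha\mapsto-\alpha^2-7$ as the composite of the normalization $\varphi$ from \cref{lem: X3_parametrization} (restricted to $Z(\Gamma_k)$) with the Galois-equivariant bijection \eqref{eq: bijection between subsets of Xm and Z(Delta)} for $m=3$. You simply spell out the details the paper leaves implicit — in particular the observation that the node $(8,0)$, the unique point where $\varphi$ fails to be injective, cannot lie in the target set since $8$ is not a root of unity, and the short separability check needed to pass from the root bijection to the irreducibility equivalence.
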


\begin{proof}
    By \cref{lem: X3_parametrization}, we have a Galois-equivariant bijection
    \[
        Z(\Gamma_k)\xlongrightarrow{\sim}\{(\zeta,\gamma)\in X_m(\overline{\mathbb{Q}})\mid \cyc_k(\zeta)=0\}; \quad \alpha\mapsto (\alpha^3-\alpha^2+7\alpha+1, -\alpha^2-7).
    \]
    We get the desired bijection by combining this with \cref{eq: bijection between subsets of Xm and Z(Delta)}.
    Since the irreducibility of a separated polynomial over $\Q$ is equivalent to the transitivity of the $\Gal(\overline{\Q}/\Q)$-action on the set of roots, the latter statement holds.
\end{proof}

\begin{rem}
    The genus of the multiplier curve $X_m$ is computed in \cite[Theorem C]{Morton96}.
    In particular, the curve $X_4$ is also of genus $0$.
    An explicit parametrization is given in \cite{GF95}: the normalization of $X_4$ is given by
    \[
        \mathbb{A}^1_{\mathbb{Q}}\setminus\{0\}\longrightarrow X_4;\quad t\mapsto (-t^4 - 2t^3 - 4t^2 - 6t + 5 + 8t^{-1} + 16t^{-2},-t^2 - 3 - 4t^{-1}).
    \]
\end{rem}

% --------------------------------------------------------------------------

\section{Distribution of roots of $\Gamma_k$} \label{sec: distribution}

% --------------------------------------------------------------------------
Let $k\geq 2$ be an integer.
In this section, we set $g(t) = t^3 - t^2 + 7t + 1$, so that $\Gamma_k(t)=\cyc_k(g(t))$.
This section aims to give some estimates on the distribution of the roots of $\Gamma_k$.

\begin{dfn}
    We define subsets $\bbC_L, \bbC_R, A, B$ of $\bbC$ as follows:
    \begin{align}
        \bbC_L &= \left\{z\in \bbC\mid \Re(z)\leq 0\right\},\\
        \bbC_R &= \left\{z\in \bbC\mid \Re(z)> 0\right\},\\
        A &= \left\{z\in \bbC_L\mid |g(z)| = 1\right\}, \text{ and}\\
        B &= \left\{z\in \bbC_R\mid |g(z)| = 1\right\}.
    \end{align}
    We also set $s_1=0.275$, $s_2=2.75$, $s_3=0.495$, and $s_4=0.64$.
\end{dfn}

Since the roots of $\Gamma_k$ are contained in $A\cup B$, we want to study the location of the sets $A$ and $B$.
We summarize the estimates in this section in \cref{fig: evaluation_of_roots}. 

\begin{figure}[htb] 
    \includegraphics[width=8cm]{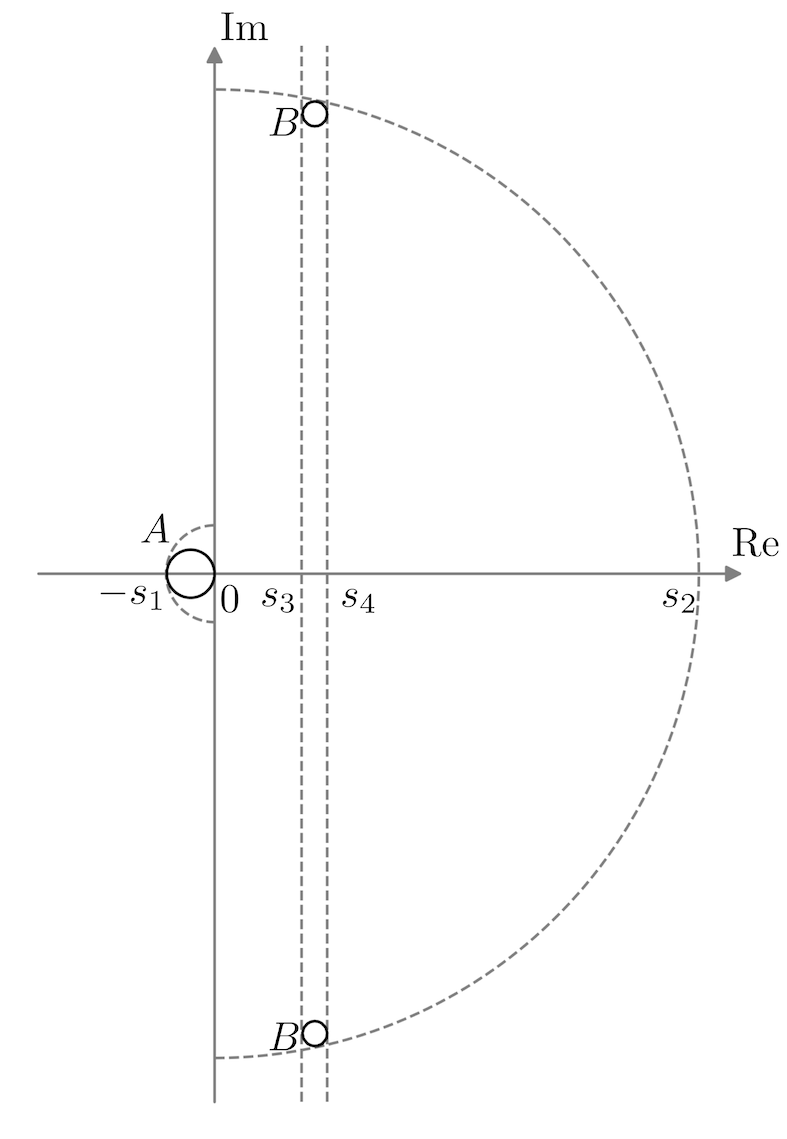}
    \caption{Location of the sets $A$ and $B$}
    \label{fig: evaluation_of_roots}
\end{figure}

We note that $A$ and $B$ are the sets of solutions of the equation $|g(z)|^2 - 1=0$.
It is easy to see that $|g(x + yi)|^2 - 1$ is a polynomial in $x$ and $y^2$.
In particular, $|g(re^{i\theta})|^2 - 1$ is a polynomial in $r\cos\theta$ and $r^2\sin^2\theta$, which can also be written as a polynomial in $r$ and $\cos\theta$.
This observation motivates the following definition.

\begin{dfn}\label{dfn: Gi}
    We define two polynomials $G_1(r, T)$ and $G_2(x, Y)$ by
    \begin{align}
        &|g(re^{i\theta})|^2 = G_1(r, \cos \theta) + 1,\text{ and}\\
        &|g(x + yi)|^2 = G_2(x, y^2) + 1.
    \end{align}
    Explicitly, these polynomials are given by
    \begin{align}
        G_1(r, T) ={} & 8r^3 T^3 + (28 r^4 - 4r^2) T^2 + (-2r^5 - 20 r^3 + 14 r)T + (r^6 - 13 r^4 + 51 r^2),\\
        G_2(x,Y) ={} & Y^3 + (3x^2 - 2x - 13)Y^2 + (3x^4 - 4x^3 + 2x^2 - 20x + 51)Y\\
        &+ (x^6 - 2x^5 + 15x^4 - 12x^3 + 47x^2 + 14x).
    \end{align}
\end{dfn}

\begin{lem} \label{lem: G_has_no_roots}
    For sufficiently large $R > 0$ and sufficiently small $\varepsilon > 0$, the polynomial $G_1(r, T)$ has no roots in
    \begin{align}
        S_1 &= \left\{
            (r, 0) \in \R^2 \mid r > 0
        \right\},\\
        S_2 &= \left\{
            (R, T) \in \R^2 \mid -1 \leq T \leq 1
        \right\}, \\
        S_3 &= \left\{
            (s_1, T) \in \R^2 \mid -1 \leq T \leq 0
        \right\},\\
        S_4 &= \left\{
            (s_2, T) \in \R^2 \mid 0 \leq T \leq 1
        \right\}, \text{ and}\\
        S_5 &= \left\{
            (\varepsilon, T) \in \R^2 \mid 0 \leq T \leq 1
        \right\}.
    \end{align}

    Therefore, $\Gamma_k(t)$ has no roots in the regions in the complex plane displayed in \cref{fig: roots_range1}.
\end{lem}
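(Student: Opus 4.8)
The plan is to verify the claim separately on each of the five sets $S_1,\dots,S_5$, using the explicit formula for $G_1$ recorded in \cref{dfn: Gi}.

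On $S_1$ one has $G_1(r,0)=r^2(r^4-13r^2+51)$, and since the quadratic $u\mapsto u^2-13u+51$ has discriminant $169-204<0$ it is positive for all real $u$; hence $G_1(r,0)>0$ whenever $r\neq 0$. On $S_2$ I would write $G_1(R,T)=R^6-13R^4+51R^2+8R^3T^3+(28R^4-4R^2)T^2+(-2R^5-20R^3+14R)T$ and bound, for $|T|\le 1$, the sum of the last three summands in absolute value by a fixed polynomial in $R$ of degree $5$; since $R^6-13R^4+51R^2\sim R^6$, this dominates once $R$ is large, so $G_1(R,T)>0$ on $S_2$. For $S_5$ I would collect $G_1(\varepsilon,T)$ by powers of $\varepsilon$, obtaining $G_1(\varepsilon,T)=14\varepsilon T+\varepsilon^2(51-4T^2)+\varepsilon^3 h(\varepsilon,T)$ for a polynomial $h$; for $T\in[0,1]$ the first two terms are $\geq 47\varepsilon^2$, while $|\varepsilon^3 h(\varepsilon,T)|$ is at most a constant times $\varepsilon^3$ when $0<\varepsilon<1$, so $G_1(\varepsilon,T)>0$ once $\varepsilon$ is small enough.

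The sets $S_3$ and $S_4$ are handled by substituting the rational values $r=s_1=11/40$ and $r=s_2=11/4$ and regarding $G_1(s_i,T)$ as a cubic in $T$ with explicit rational coefficients. For $S_3$, a short computation shows that $\tfrac{d}{dT}G_1(s_1,T)$ is a quadratic in $T$ with positive leading coefficient and negative discriminant, hence $T\mapsto G_1(s_1,T)$ is strictly increasing on $\R$; since $G_1(s_1,-1)>0$, the cubic stays positive on all of $[-1,0]$. For $S_4$, the derivative $\tfrac{d}{dT}G_1(s_2,T)$ has positive leading coefficient, is negative at $T=0$ and positive at $T=1$, so it has a unique zero $T_0\in(0,1)$; thus the minimum of $G_1(s_2,\cdot)$ over $[0,1]$ is attained at $T_0$ or at an endpoint. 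The endpoint values are manifestly positive (the value at $T=0$ being $s_2^2(s_2^4-13s_2^2+51)>0$ as in the $S_1$ case), and one checks $G_1(s_2,T_0)>0$ by confining $T_0$ to a small rational interval and bounding $G_1(s_2,\cdot)$ from below there, or equivalently by a Sturm-sequence sign count for the cubic on $[0,1]$.

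The genuinely delicate point is $S_4$: the minimum of $G_1(s_2,T)$ over $[0,1]$ is very small (numerically around $0.2$), so establishing its positivity requires the rational estimates above to be carried out with enough precision to be rigorous rather than merely numerical. Every other case leaves a comfortable sign margin, so the remaining work there is routine.
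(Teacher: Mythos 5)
Your proof is correct and follows essentially the same strategy as the paper: reduce each of $S_1,\dots,S_5$ to an explicit sign check on $G_1$. The minor stylistic differences are: on $S_1$ the paper completes the square as $r^2\bigl((r^2-13/2)^2+35/4\bigr)$ while you check the discriminant of $u^2-13u+51$; on $S_3$ and $S_4$ the paper introduces the explicit critical-point formula $T_\pm(r)$ and checks whether these are real and where they lie, while you argue directly from the sign/discriminant of $\frac{d}{dT}G_1$, which is the same computation in different words; and on $S_5$ the paper reuses the monotonicity argument (noting $T_\pm(\varepsilon)$ are non-real for small $\varepsilon$, so the minimum is $G_1(\varepsilon,0)>0$, already handled in $S_1$), whereas you expand in powers of $\varepsilon$ and bound $G_1(\varepsilon,T)\geq 47\varepsilon^2 - O(\varepsilon^3)$, which is slightly more work but equally valid.

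You are right that $S_4$ is the only case with a small margin and deserves a rigorous certificate such as a Sturm-sequence sign count or a rational sandwich for $T_0=T_+(s_2)$. In fact you caught something the paper glosses over: the actual minimum is $G_1(s_2,T_+(s_2))\approx 0.2027$, not $1.2027\cdots$ as written in the paper; the paper's figure is $\lvert g\rvert^2$ at the minimizer rather than $G_1=\lvert g\rvert^2-1$. The conclusion is unaffected, but your numerically-around-$0.2$ estimate is the accurate one and your caution about needing genuine rational bounds there is well placed.
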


\begin{figure}[htb] 
    \includegraphics[width=8cm]{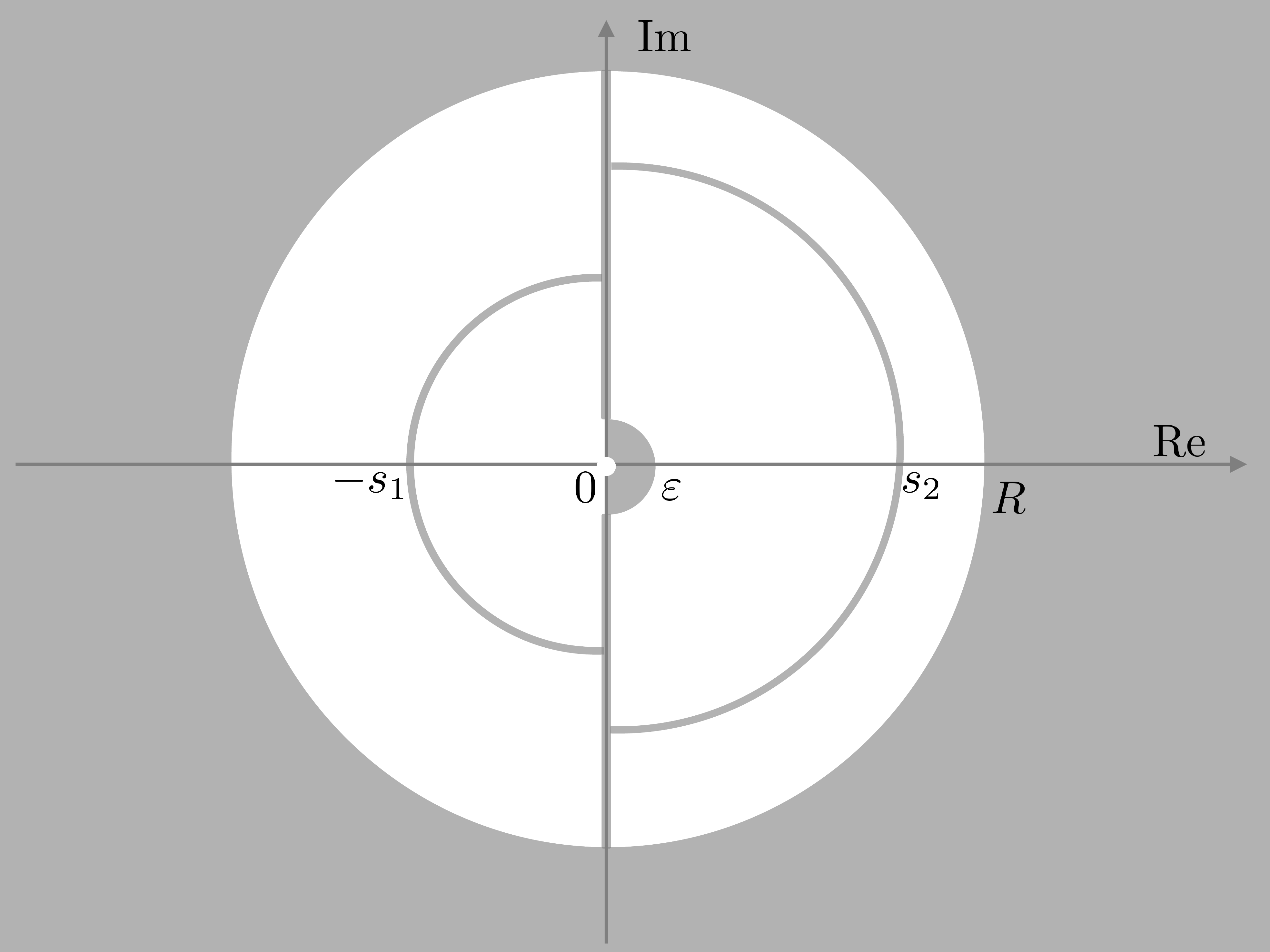}
    \caption{Regions where $\Gamma_k(t)$ has no roots}
    \label{fig: roots_range1}
\end{figure}

\begin{proof}
    Since we have
    \[
        G_1(r,0) = r^2\left((r^2 - 13/2)^2 + 35/4\right) > 0
    \]
    for $r > 0$, the polynomial $G_1(r, T)$ has no roots in $S_1$.
    The value $\min\left\{ G_1(r, T) \mid -1\leq T \leq 1\right\}$ is bounded from below by a monic polynomial in $r$ of degree $6$, so it diverges to $+\infty$ as $r \to \infty$.
    Hence, the polynomial $G_1(r, T)$ has no roots in $S_2$ for sufficiently large $R$.

    To prove the statements for $S_3$, $S_4$, and $S_5$, set
    \[
        T_{\pm}(r) = \frac{-7r^2 + 1 \pm \sqrt{52r^4 + 16 r^2 - 20}}{6r},
    \]
    which are the critical points of redthe function $T\mapsto G_1(r, T)$.

    \begin{itemize}
    \item   Since $T_-(s_1)$ and $T_+(s_1)$ are not real, the value $G_1(s_1, T)$ is monotonely increasing in $T$. Thus, we have the inequalities
    \[
        \min\left\{ G_1(s_1, T) \mid -1\leq T\leq 0\right\} = G_1(s_1, -1) = 0.04330\cdots > 0.
    \]
    Hence, the polynomial $G_1(r, T)$ has no roots in $S_3$.

    \item   Since we have $T_-(s_2) < 0 < T_+(s_2) < 1$,
    we get the inequalities
    
    \[
        \min\left\{ G_1(s_2, T) \mid 0\leq T\leq 1\right\} = G_1(s_2, T_+(s_2)) = 0.2027\cdots > 0.
    \]
    
    Hence, the polynomial $G_1(r, T)$ has no roots in $S_4$.

    \item   Since $T_\pm(\varepsilon)$ is not real for sufficiently small $\varepsilon$,
    the value $G(\varepsilon, T)$ is monotonely increasing in $T$.
    Thus, we have the inequalities
    \[
        \min\left\{ G_1(\varepsilon, T) \mid 0\leq T\leq 1\right\} = G(\varepsilon, 0) = \varepsilon^2\left((\varepsilon^2 - 13/2)^2 + 35/4\right) > 0.
    \]
    Hence, the polynomial $G_1(r, T)$ has no roots in $S_5$.
    \end{itemize}
\end{proof}

\begin{lem}\label{lem: distribution_of_zeros_on_stripes}
    The polynomial $G_2(x, Y)$ has no roots in
    \begin{align}
        S_6 &= \left\{ (x, Y) \in \mathbb{R}^2 \mid x = s_3,\ Y \geq 0 \right\}, \text{ and}\\
        S_7 &= \left\{ (x, Y) \in \mathbb{R}^2 \mid x = s_4,\ Y \geq 0 \right\}.
    \end{align}
    Therefore, $\Gamma_k(t)$ has no roots in two lines in the complex plane displayed in \cref{fig: roots_range2}.
\end{lem}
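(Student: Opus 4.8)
The plan is to fix $x$ and regard $G_2(x,Y)$ as a monic cubic in $Y$, then show it has no root in $[0,\infty)$ for $x=s_3$ and $x=s_4$. Following the strategy of the proof of \cref{lem: G_has_no_roots}, I would first locate the critical points of $G_2(x,\cdot)$. Since
\[
    \partial_Y G_2(x,Y) = 3Y^2 + 2(3x^2-2x-13)Y + (3x^4-4x^3+2x^2-20x+51)
\]
and, by a direct expansion, $(3x^2-2x-13)^2 - 3(3x^4-4x^3+2x^2-20x+51) = -16(5x^2-7x-1)$, the critical points are
\[
    Y_\pm(x) = \frac{-(3x^2-2x-13)\pm\sqrt{-16(5x^2-7x-1)}}{3}.
\]
For $x\in(0,1)$ the parabolas $5x^2-7x-1$ and $3x^2-2x-13$ are negative at the endpoints $x=0,1$, hence negative on $(0,1)$, while $3x^4-4x^3+2x^2-20x+51$ is positive there (it is decreasing on $[0,1]$ with value $32$ at $x=1$); since $s_3=0.495$ and $s_4=0.64$ lie in $(0,1)$, at $x\in\{s_3,s_4\}$ the numbers $Y_\pm(x)$ are real, and by Vieta's relations $Y_-(x)+Y_+(x) = -\tfrac23(3x^2-2x-13)>0$ and $Y_-(x)Y_+(x) = \tfrac13(3x^4-4x^3+2x^2-20x+51)>0$, both are positive. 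As $G_2(x,\cdot)$ is monic of degree $3$, it increases on $(-\infty,Y_-(x)]$, decreases on $[Y_-(x),Y_+(x)]$ and increases on $[Y_+(x),\infty)$, so
\[
    \min_{Y\ge 0}G_2(x,Y) = \min\bigl\{G_2(x,0),\,G_2(x,Y_+(x))\bigr\}.
\]

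It therefore suffices to verify $G_2(x,0)>0$ and $G_2(x,Y_+(x))>0$ for $x\in\{s_3,s_4\}$. The first inequality is easy and in fact holds for all $x\in(0,1]$: writing
\[
    G_2(x,0) = x\bigl(x^5-2x^4+15x^3-12x^2+47x+14\bigr)
\]
and grouping $x^5+15x^3+47x = x(x^4+15x^2+47)\ge 47x$ with $-2x^4-12x^2+14 = 2(1-x^2)(x^2+7)\ge 0$, we get $G_2(x,0)\ge 47x^2>0$.

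The remaining inequality $G_2(x,Y_+(x))>0$ at $x=s_3,s_4$ is the heart of the matter, and I expect it to be the main obstacle. The constants $s_3=0.495$ and $s_4=0.64$ are evidently tuned so that this value is positive but \emph{only barely} so — numerically it is of order $10^{-1}$ for $s_3$ and only of order $10^{-3}$ for $s_4$ (for comparison, at $x=1/2$ the corresponding minimum value is already negative), so a crude floating-point estimate does not suffice and the inequality must be certified symbolically. I would do this in one of the following equivalent ways. (i) Evaluate $G_2(s_i,Y_+(s_i))$ exactly: with $s_i\in\{99/200,\,16/25\}$ it lies in the real quadratic field $\mathbb{Q}\bigl(\sqrt{-16(5s_i^2-7s_i-1)}\bigr)$, so one writes it as $a_i+b_i\sqrt{d_i}$ with explicit rationals and checks positivity by squaring. (ii) Equivalently, since the two critical values $G_2(s_i,Y_-(s_i))$ and $G_2(s_i,Y_+(s_i))$ both turn out to be positive, the cubic $G_2(s_i,Y)\in\mathbb{Q}[Y]$ has a unique real root, i.e.\ $\Disc_Y G_2(s_i,Y)<0$; as the product of its three roots equals $-G_2(s_i,0)<0$ while the two complex roots have positive product, that real root is negative, whence $G_2(s_i,Y)\neq 0$ for $Y\ge 0$. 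Either route reduces the lemma to a single sign test on an explicit rational number; as a cross-check one may instead invoke Sturm's theorem for $G_2(99/200,Y)$ and $G_2(16/25,Y)$. In each case the only genuine difficulty is that the relevant quantity is tiny, so it must be computed exactly rather than estimated.
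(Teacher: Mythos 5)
Your proof follows essentially the same route as the paper: locate the critical points $Y_\pm(x)$ of the monic cubic $G_2(x,\cdot)$, reduce to checking $\min\{G_2(x,0),\,G_2(x,Y_+(x))\}>0$ at $x=s_3,s_4$, and then evaluate. Your emphasis on the razor-thin margin at $s_4$ is well placed --- the paper simply reports the numerical value $\approx 0.00023$ of $G_2(s_4,Y_+(s_4))$ and declares it positive, whereas your plan to certify it exactly (rational arithmetic in $\mathbb{Q}\bigl(\sqrt{-16(5s_4^2-7s_4-1)}\bigr)$ or Sturm's theorem on $G_2(16/25,Y)$) is the rigorous way to nail down that last step, and your symbolic bound $G_2(x,0)\ge 47x^2$ on $(0,1]$ is a clean replacement for the paper's numerical check of that half of the minimum.
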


\begin{figure}[htb] 
    \includegraphics[width=8cm]{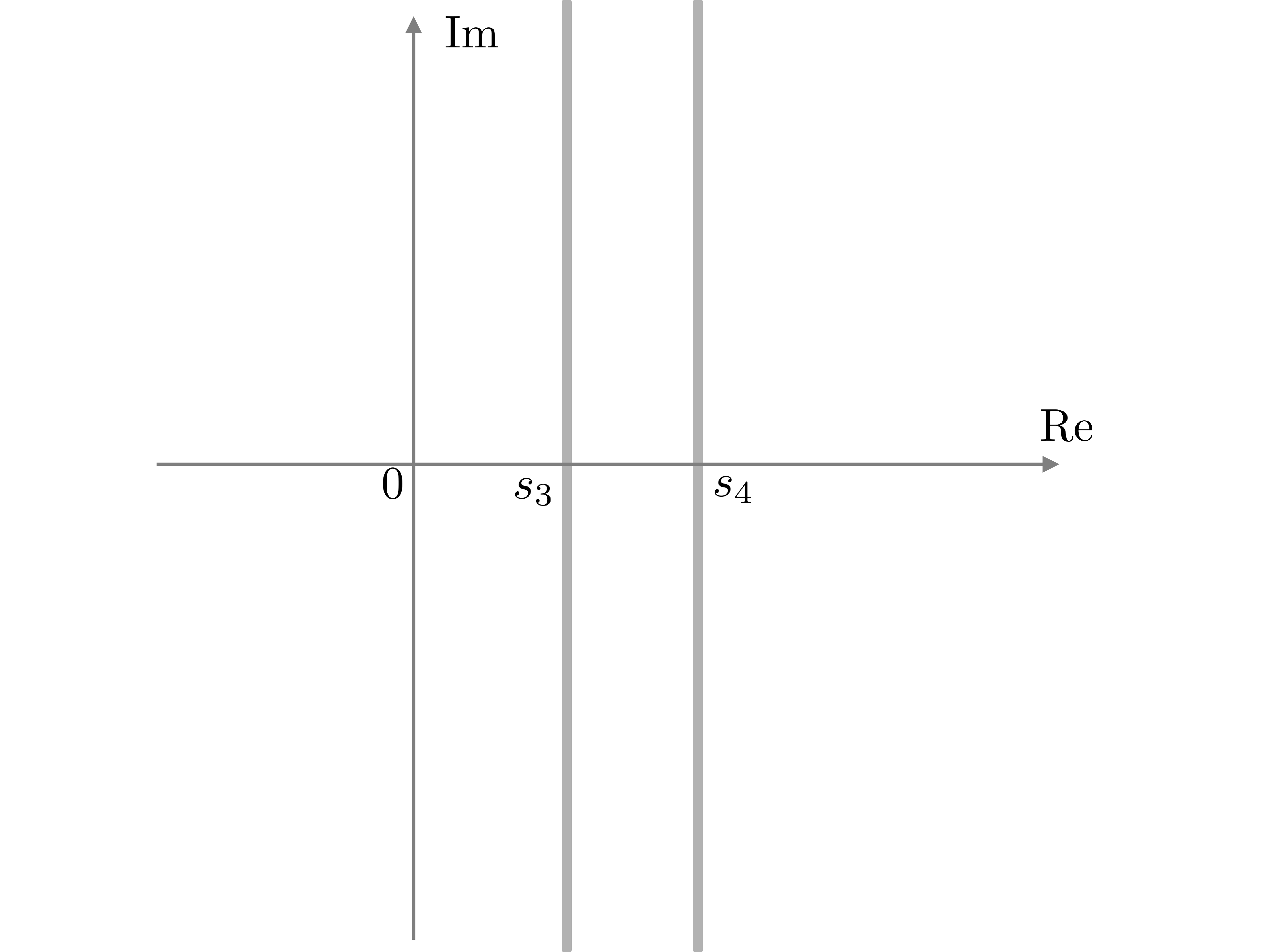}
    \caption{Two lines where $\Gamma_k(t)$ has no roots}
    \label{fig: roots_range2}
\end{figure}

\begin{proof}
    For each $x\in \R$, we set
    \[
        Y_\pm(x) = \frac{-3x^2 + 2x + 13 \pm \sqrt{-80x^2 + 112x + 16}}{3},
    \]
    which are the critical points of $G_1(x, Y)$ as a function of $Y$.
    %Evaluating at $x = s_3, s_4$, we get
    %\begin{align}
    %    Y_+(s_3) &= 0.6818\cdots > 0,\text{ and}\\
    %    Y_+(s_4) &= 0.6820\cdots > 0.
    %\end{align}
    Since we have $Y_+(s_3) > 0$ and $Y_+(s_4) > 0$, we get the inequalities
    \begin{align}
        \phantom{=}\min\{G_2(s_3, Y)\mid Y\geq 0\}
        &= \min\{G_2(s_3, 0), G_2(s_3, Y_+(s_3))\}\\
        &= \min\{17.8465\ldots, 0.1065\ldots\} > 0, \text{ and}\\
        \phantom{=}\min\{G_2(s_4, Y)\mid Y\geq 0\}
        &= \min\{G_2(s_4, 0), G_2(s_4, Y_+(s_4))\}\\
        &= \min\{27.436\ldots, 0.00023\ldots\} > 0.
    \end{align}
    The assertion follows from these inequalities.
\end{proof}

For a complex number $z_0$ and a positive real number $r$, let $D(z_0,r)$ be the open disc of radius $r$ centered at $z_0$.
Then, the following lemma on the distribution of roots of $g(z) - z_0$ for $z_0 \in \bbC$ with $|z_0| = 1$ holds.
\begin{lem}\label{lem: distributions_of_zeros_on_circles}
We have the following statements.
    \begin{enumerate}
        \item\label{item:lem: A_is_contained}
            For all $z_0 \in \bbC$ with $|z_0| = 1$ and sufficiently small $\varepsilon>0$, the polynomial $g(z) - z_0$ has exactly one root in $D_1 \coloneqq (\bbC_L \cap D(0, s_1)) \cup D\left(0, \varepsilon\right)$.

        \item\label{item:lem: B_is_contained}
            For all $z_0 \in \bbC$ with $|z_0| = 1$ and sufficiently small $\varepsilon>0$, the polynomial $g(z) - z_0$ has exactly two roots in $D_2 \coloneqq (\bbC_R\cap D\left(0, s_2\right)) \setminus D\left(0, \varepsilon\right)$.

        \item\label{item:lem: no_roots_in_left_hand_side}
            For all $z_0 \in \bbC$ with $|z_0| = 1$, the polynomial $g(z) - z_0$ has no roots in $D_3 \coloneqq \bbC_L \setminus D\left(0, s_1\right)$.

        \item\label{item:lem: no_roots_in_right_hand_side}
            For all $z_0 \in \bbC$ with $|z_0| = 1$, the polynomial $g(z) - z_0$ has no roots in $D_4 \coloneqq \bbC_R \setminus D\left(0, s_2\right)$.
    \end{enumerate}
\end{lem}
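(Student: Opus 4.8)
The plan is to prove all four statements at once by counting, for each region $D\in\{D_1,D_2,D_3,D_4\}$ and each $z_0$ with $|z_0|=1$, the zeros of $g(z)-z_0$ inside $D$ via the argument principle, using \cref{lem: G_has_no_roots} to pin down the behaviour of $g$ on $\partial D$. Since $|g(re^{i\theta})|^2=G_1(r,\cos\theta)+1$, the condition $|g(z)|=1$ on one of the circular arcs or imaginary-axis segments comprising $\partial D$ is equivalent to a zero of $G_1$, and \cref{lem: G_has_no_roots} excludes this: on the imaginary axis away from the origin use $S_1$ (where indeed $G_1(r,0)=r^2((r^2-\tfrac{13}{2})^2+\tfrac{35}{4})>0$), on the left half of $|z|=s_1$ use $S_3$, on the right half of $|z|=s_2$ use $S_4$, on the right half of a small circle $|z|=\varepsilon$ use $S_5$, and on a large circle $|z|=R$ use $S_2$. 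On every such arc one gets $|g(z)|>1\ge|z_0|$, so $g(z)-z_0$ has no zero there, and hence
\[
    \#\{z\in D\mid g(z)=z_0\}=\frac{1}{2\pi i}\oint_{\partial D}\frac{g'(z)}{g(z)-z_0}\,dz
\]
(for $D$ bounded, with $\partial D$ positively oriented) is an integer-valued, continuous, hence locally constant function of $z_0$ on the connected set $\{|z_0|\le1\}$; so it takes the same value for every $|z_0|=1$ as for $z_0=1$.

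It then remains to do the count at $z_0=1$, which is transparent because $g(z)-1=z^3-z^2+7z=z(z^2-z+7)$ has zeros $0$ and $\tfrac12(1\pm 3\sqrt3\,i)$, the latter two having real part $\tfrac12>0$ and absolute value $\sqrt7<2.75=s_2$. For the first statement, $\partial D_1$ consists of the left half of $|z|=s_1$, the two imaginary-axis segments with $\varepsilon\le|\Im z|\le s_1$, and the right half of $|z|=\varepsilon$; the only zero of $g(z)-1$ in $D_1$ is $0$, so the count is $1$. For the second, $\partial D_2$ consists of the right half of $|z|=s_2$, the imaginary-axis segments with $\varepsilon\le|\Im z|\le s_2$, and the right half of $|z|=\varepsilon$; the zeros of $g(z)-1$ in $D_2$ are exactly $\tfrac12(1\pm 3\sqrt3\,i)$, so the count is $2$. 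For the last two statements $D_3,D_4$ are unbounded, but any zero $z$ of $g(z)-z_0$ with $|z_0|=1$ satisfies $|z|^3-|z|^2-7|z|-1\le|g(z)|=1$ and hence lies in a fixed disc $D(0,R)$; enlarging $R$ if necessary so that $S_2$ applies, I would run the argument on $D_3\cap D(0,R)$ and $D_4\cap D(0,R)$, whose boundaries additionally involve the left, resp.\ right, half of $|z|=R$. Since neither $0$ (with $|z|<s_1$) nor $\tfrac12(1\pm3\sqrt3\,i)$ (with real part $>0$, resp.\ $|z|=\sqrt7<s_2$) lies in $D_3$, resp.\ $D_4$, the count is $0$ in both. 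As a consistency check, the four counts add to $3=\deg(g(z)-z_0)$ because $D_1,\dots,D_4$ partition $\bbC$ once $\varepsilon<s_1$. (One could instead avoid deforming $z_0$ by applying Rouché's theorem directly to $g(z)-z_0$ and $g(z)$, reducing the count to the number of zeros of $g$ in $D$, which one would then locate using $g'=3z^2-2z+7>0$.)

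I expect the only delicate part to be the geometric bookkeeping: describing the boundaries of the somewhat unusual regions $D_1$ (a left half-disc with a small full disc attached at the origin) and $D_2$ (a right half-disc with a small half-disc cut out at the midpoint of its diameter) carefully enough to see that every boundary arc is of one of the five types covered by \cref{lem: G_has_no_roots}, and checking that the sample zeros $0$ and $\tfrac12(1\pm 3\sqrt3\,i)$ lie in the open interiors rather than on any boundary arc (so the contour integrals really count zeros of $g(z)-z_0$ in $D$). The analytic ingredient — local constancy of the argument-principle integral under continuous variation of $z_0$ — is routine, and the only numerical input beyond \cref{lem: G_has_no_roots} is $\sqrt7<2.75$, i.e.\ $7<7.5625$. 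Note that \cref{lem: distribution_of_zeros_on_stripes} is not needed for this lemma.
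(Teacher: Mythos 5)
Your proposal is correct and follows essentially the same strategy as the paper: it fixes the relevant boundary contours (the left half-circle of radius $s_1$ together with a small right half-circle around $0$, the right half-circle of radius $s_2$ minus the small half-circle, and a large circle of radius $R$ to handle the unbounded regions), invokes \cref{lem: G_has_no_roots} to rule out zeros of $g(z)-z_0$ on these contours, uses continuity and integrality of the argument-principle integral to deduce constancy in $z_0$ over $|z_0|=1$, and evaluates at $z_0=1$ via the explicit factorization $g(z)-1=z(z^2-z+7)$. The only cosmetic difference is that the paper names the contours $\Omega_1(r,\varepsilon)$, $\Omega_2(r,\varepsilon)$ and writes $I_3, I_4$ as differences of two contour integrals, whereas you describe $\partial D$ directly and truncate at $D(0,R)$; the substance is identical.
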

\begin{proof}
Putting $z = re^{i\theta}$ with $r \in \R_{\geq 0}$ and $\theta \in [0,2\pi)$,
we have $|g(z)|^2 = G_1(r, \cos\theta) + 1$, where $G_1(r, T)$ is the polynomial defined in \cref{dfn: Gi}.
Consider the paths $\Omega_1(r,\varepsilon)$ and $\Omega_2(r,\varepsilon)$ shown in \cref{fig: integration_paths_1}.

% -------------------
% Omega_1(r,varepsilon)の描画
% -------------------
\begin{figure}[htb]
\centering
\begin{tabular}{cc}
\begin{tikzpicture}[%詳細設定
    arrow/.style={
        postaction={
            decorate,
            decoration={
                markings,
                mark=at position #1 with {\arrow{stealth}}
            }
        }
    },
    label2/.style 2 args={
        pos/.style={
            postaction={
                decorate,
                decoration={
                    markings,
                    mark=at position ##1 with \node #2;
                }
            }
        }
    },
    label/.style={
        label2={1}{#1}
    },
    pos/.default=.5,
    arrow/.default=.5,
    scale = 0.8
]%設定終了

%複素数平面の描画
\draw[->] (-4, 0) -- (3, 0) node[right]{$\Re z$};
\draw[->] (0, -4) -- (0, 4) node[above]{$\Im z$};

%原点の描画
\draw (0,0) node[below left]{0};

%積分経路の描画%label->posの順で設定しないと反映されない仕様
\draw[very thick,domain=-3:-0.6,
    label={[right]{}},
    pos, arrow
    ]
plot(0,{\x});%posやarrowをの具体的な値を省略している

\draw[very thick,domain= 0.6: 3,
    label={[above]{}}, pos, arrow=.5]
plot (0,{\x});%posを省略しているがarrowは省略していない

\draw[very thick,domain=90:270,variable=\t,label={[above left]{$\Omega_1(r,\varepsilon)$}},
    pos={.25}, arrow=.75]
    plot({3*cos(\t)}, {3*sin(\t)});%pos,arrowの両方ともに省略していない

\draw[very thick,domain=-90:90,variable=\t,label={[right]{}},pos={.25},arrow=.5]  plot ({0.6*cos(\t)},{0.6*sin(\t)});%両方ともに省略していない

%座標の描画
\draw (-3, 0) node[above right]{$z=-r$};
\draw (0, -.6) node[below right]{$z=-\varepsilon i$};
\draw (0, .6) node[above right]{$z=\varepsilon i$};
\draw (.6, 0) node[below right]{$z=\varepsilon$};
\draw (0,  3) node[right]{$z=ri$};
\draw (0,  -3) node[right]{$z=-ri$};
\end{tikzpicture}

% -------------------
% Omega_1(r,varepsilon)の描画ここまで
% -------------------
&
% -------------------
% Omega_2(r,varepsilon)の描画
% -------------------

\begin{tikzpicture}[%詳細設定
    arrow/.style={
        postaction={
            decorate,
            decoration={
                markings,
                mark=at position #1 with {\arrow{stealth}}
            }
        }
    },
    label2/.style 2 args={
        pos/.style={
            postaction={
                decorate,
                decoration={
                    markings,
                    mark=at position ##1 with \node #2;
                }
            }
        }
    },
    label/.style={
        label2={1}{#1}
    },
    pos/.default=.5,
    arrow/.default=.5,
    scale = 0.8 
]%設定終了

%複素数平面の描画
\draw[->] (-2.5, 0) -- (4, 0) node[right]{$\Re z$};
\draw[->] (0, -4) -- (0, 4) node[above]{$\Im z$};

%原点の描画
\draw (0,0) node[below left]{0};

%積分経路の描画%label->posの順で設定しないと反映されない仕様
\draw[very thick,domain=-.6: -3,
    label={[right]{}},
    pos, arrow
    ]
plot(0,{\x});%posやarrowをの具体的な値を省略している

\draw[very thick, domain= 3: .6,
    label={[above]{}}, pos, arrow=.5]
plot (0, {\x});%posを省略しているがarrowは省略していない

\draw[very thick,domain=-90:90,variable=\t,label={[below right]{$\Omega_2(r,\varepsilon)$}},
    pos={.25}, arrow=.75]
    plot({3*cos(\t)}, {3*sin(\t)});%pos,arrowの両方ともに省略していない

\draw[very thick, domain=90:-90,variable=\t,label={[right]{}}, pos={.25},arrow=.5]  plot ({0.6*cos(\t)},{0.6*sin(\t)});%両方ともに省略していない

%座標の描画
\draw (3, 0) node[above left]{$z=r$};
\draw (0, -.6) node[below left]{$z=-\varepsilon i$};
\draw (0, .6) node[above left]{$z=\varepsilon i$};
\draw (.6, 0) node[below right]{$z=\varepsilon$};
\draw (0,  3) node[left]{$z=ri$};
\draw (0, -3) node[left]{$z=-ri$};
\end{tikzpicture}
\end{tabular}
\caption{Integration paths $\Omega_1(r,\varepsilon)$ and $\Omega_2(r,\varepsilon)$}
\label{fig: integration_paths_1}
\end{figure}
% -------------------
% Omega_2(r,varepsilon)の描画ここまで
% -------------------

By \cref{lem: G_has_no_roots}, if $z_0 \in \bbC$ satisfies $|z_0|=1$,
the polynomial $g(z) - z_0$ has no roots and poles on $\Omega_1(s_1, \varepsilon)$, $\Omega_1(R, \varepsilon)$, $\Omega_2(s_2, \varepsilon)$, and $\Omega_2(R, \varepsilon)$ for sufficiently large $R > 0$ and sufficiently small $\varepsilon > 0$.
For such $R$ and $\varepsilon$, the number of zeros of $g(z) - z_0$ in $D_1, D_2, D_3$, and $D_4$ are equal to the integrations
\begin{align}
    I_1(z_0) &\coloneqq \dfrac{1}{2 \pi i}\oint_{\Omega_1(s_1, \varepsilon)} \frac{g'(z)}{g(z) - z_0} dz,\\
    I_2(z_0) &\coloneqq \dfrac{1}{2\pi i}\oint_{\Omega_2(s_2, \varepsilon)} \frac{g'(z)}{g(z) - z_0} dz,\\
    I_3(z_0) &\coloneqq \dfrac{1}{2\pi i}\oint_{\Omega_1(R, \varepsilon)} \frac{g'(z)}{g(z) - z_0} dz - I_1, \text{ and }\\
    I_4(z_0) &\coloneqq \dfrac{1}{2\pi i}\oint_{\Omega_2(R, \varepsilon)} \frac{g'(z)}{g(z) - z_0} dz - I_2,
\end{align}
respectively, by the argument principle.
The quantities $I_j(z_0)\;(1\leq j \leq 4)$ are well-defined and continuous in $z_0$.
On the other hand, since $I_j(z_0)$ are integers, they are constant as a function of $z_0$.
Therefore, it suffices to calculate the values $I_j(1)$ $(1\leq j \leq 4)$.
Since we have
\[
    g(z) - 1 = z^3 - z^2 + 7z = z\left(z - \frac{1 + 3\sqrt{-3}}{2}\right)\left(z - \frac{1 - 3\sqrt{-3}}{2}\right),
\]
we get $I_1(1) = 1$, $I_2(1) = 2$, and $I_3(1) = I_4(1) = 0$.
\end{proof}

\begin{lem}\label{lem: A_B_real_part}
    We have $\Re(A)\subset [-s_1, 0]$ and $\Re(B) \subset [s_3, s_4]$.
    Moreover, $z=0$ is the only value such that $z\in A$ and $\Re(z)=0$.
\end{lem}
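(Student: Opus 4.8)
The plan is to treat the three assertions separately, in each case using that any $z$ with $|g(z)|=1$ is a zero of $g(w)-z_0$ for $z_0:=g(z)$ on the unit circle, so that the counting of \cref{lem: distributions_of_zeros_on_circles} and the stripe estimates of \cref{lem: distribution_of_zeros_on_stripes} apply. The statements about $A$ are immediate: if $z\in A$ then $\Re(z)\le 0$ and $z$ is a zero of $g(w)-g(z)$ lying in $\bbC_L$, so by the ``no roots in $D_3$'' part of \cref{lem: distributions_of_zeros_on_circles} it cannot lie in $\bbC_L\setminus D(0,s_1)$; hence $|z|<s_1$ and $\Re(z)\in(-s_1,0]\subset[-s_1,0]$. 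For the last assertion, $0\in A$ because $g(0)=1$, and if $z=yi\in A$ then $0=|g(yi)|^2-1=G_2(0,y^2)$, whereas the explicit formula of \cref{dfn: Gi} gives $G_2(0,Y)=Y(Y^2-13Y+51)$ with $Y^2-13Y+51>0$ for every real $Y$ (discriminant $169-204<0$), forcing $y=0$.

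For $\Re(B)\subset[s_3,s_4]$ I would first record an elementary local estimate: writing $g(w)=1+h(w)$ with $h(w)=7w-w^2+w^3$, we get $|g(w)|^2-1=14\Re(w)+\bigl(49|w|^2-2\Re(w^2)\bigr)+O(|w|^3)$, and since $49|w|^2-2\Re(w^2)\ge 47|w|^2$, this is strictly positive whenever $0<|w|<\varepsilon$ and $\Re(w)\ge 0$, once $\varepsilon$ is small enough. Hence $g(w)-z_0$ has no zero in $\bbC_R\cap D(0,\varepsilon)$ for $|z_0|=1$. Since $\bbC_R$ is the disjoint union of $\bbC_R\setminus D(0,s_2)$, $\bbC_R\cap D(0,\varepsilon)$ and $D_2$, combining this with the ``no roots in $D_4$'' and ``exactly two roots in $D_2$'' parts of \cref{lem: distributions_of_zeros_on_circles} yields: for every $z_0$ on the unit circle, $g(w)-z_0$ has exactly two zeros in $\bbC_R$, all of them in $D_2$.

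Next I would set $D_2^{\mathrm{mid}}:=\{\,w\in\bbC:s_3<\Re(w)<s_4\,\}\cap D(0,s_2)$, a bounded convex open subset of $D_2$ (provided $\varepsilon<s_3$). By \cref{lem: distribution_of_zeros_on_stripes}, applied to $w=s_3+yi$ and $w=s_4+yi$ via $|g(s_i+yi)|^2=G_2(s_i,y^2)+1$, the polynomial $g(w)-z_0$ has no zeros on the lines $\Re(w)=s_3$ and $\Re(w)=s_4$; and by the ``no roots in $D_4$'' part of \cref{lem: distributions_of_zeros_on_circles} it has none on the arcs of $|w|=s_2$ with $\Re(w)>0$, which constitute the remainder of $\partial D_2^{\mathrm{mid}}$. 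Therefore
\[
    N(z_0):=\frac{1}{2\pi i}\oint_{\partial D_2^{\mathrm{mid}}}\frac{g'(w)}{g(w)-z_0}\,dw
\]
is the number of zeros of $g(w)-z_0$ in $D_2^{\mathrm{mid}}$, counted with multiplicity, and being a continuous $\Z$-valued function of $z_0$ on the unit circle it is constant --- the same locally constant winding number argument used in the proof of \cref{lem: distributions_of_zeros_on_circles}. Evaluating at $z_0=1$, where $g(w)-1=w(w^2-w+7)$ has zeros $0$ and $\frac{1\pm 3\sqrt{-3}}{2}$, the latter two have $\Re(w)=\frac12\in(s_3,s_4)$ and $|w|=\sqrt7<s_2$, so $N(1)=2$. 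Hence $N(z_0)=2$ for all such $z_0$; since $g(w)-z_0$ has only two zeros in $\bbC_R$, all of them lie in $D_2^{\mathrm{mid}}$. Finally, any $z\in B$ is a zero of $g(w)-g(z)$ with $|g(z)|=1$ and $\Re(z)>0$, hence $z\in D_2^{\mathrm{mid}}$, i.e.\ $s_3<\Re(z)<s_4$; this gives $\Re(B)\subset[s_3,s_4]$.

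I expect the two delicate points to be: (a) the closed-half-plane estimate near $w=0$, where one must check $|g(w)|>1$ on a whole punctured half-disc rather than only to first order --- this is why the term $49|w|^2-2\Re(w^2)\ge 47|w|^2$ is kept; and (b) the verification that $\partial D_2^{\mathrm{mid}}$ is free of zeros of $g(w)-z_0$, where the circular arcs are covered precisely because they lie in the open-ball complement $\bbC_R\setminus D(0,s_2)$ appearing in \cref{lem: distributions_of_zeros_on_circles}. Neither seems a serious obstacle; the real content is the reduction, through a locally constant winding number, to the single value $z_0=1$.
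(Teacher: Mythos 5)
Your proof is correct and follows the same overall strategy as the paper's: derive $\Re(A)\subset[-s_1,0]$ directly from \cref{lem: distributions_of_zeros_on_circles}, settle the ``only $z=0$'' claim via $G_2(0,Y)=Y(Y^2-13Y+51)$ with negative discriminant, and reduce $\Re(B)\subset[s_3,s_4]$ to a winding-number count that is constant in $z_0$ and can be evaluated at $z_0=1$. The two points where you deviate are both small. First, you run the argument principle on the boundary of the ``half-lens'' region $D_2^{\mathrm{mid}}=\{s_3<\Re(w)<s_4\}\cap D(0,s_2)$, whereas the paper integrates over the rectangle $\Omega_3(y_0)$ bounded by $\Re(w)=s_3$, $\Re(w)=s_4$, $\Im(w)=\pm y_0$; the counts agree because \cref{lem: distributions_of_zeros_on_circles}\cref{item:lem: no_roots_in_right_hand_side} excludes roots with $\Re(w)>0$ and $|w|\ge s_2$. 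Second, you add an explicit Taylor estimate $|g(w)|^2-1=14\Re(w)+\bigl(49|w|^2-2\Re(w^2)\bigr)+O(|w|^3)\ge 14\Re(w)+47|w|^2+O(|w|^3)$ to prove that $g(w)-z_0$ has no zero in $\bbC_R\cap D(0,\varepsilon)$. This step is genuinely useful: it pins down that $g(w)-z_0$ has \emph{exactly} two zeros in $\bbC_R$, whereas the paper only asserts (``this observation proves the first claim'') that the two strip-roots account for all of $B$; the paper is implicitly relying on the freedom to shrink $\varepsilon$ below $|z|$ for a fixed $z\in B$, which your local estimate makes unnecessary. So the approach is the same, but your write-up fills in a step the paper leaves implicit.
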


\begin{proof}
    The inclusion $\Re(A)\subset [-s_1, 0]$ follows from \cref{lem: distributions_of_zeros_on_circles}.
    
    Putting $z = x + y\sqrt{-1}$ with $x,y \in \R$, we have $|g(z)|^2 - 1 = G_2(x, y^2)$.
    Consider the path $\Omega_3(y_0)$ shown in \cref{fig: integration_path_2}.
% -------------------
% Omega_3(r,varepsilon)の描画
% -------------------
\begin{figure}[h]
\begin{tikzpicture}[%詳細設定
    arrow/.style={
        postaction={
            decorate,
            decoration={
                markings,
                mark=at position #1 with {\arrow{stealth}}
            }
        }
    },
    label2/.style 2 args={
        pos/.style={
            postaction={
                decorate,
                decoration={
                    markings,
                    mark=at position ##1 with \node #2;
                }
            }
        }
    },
    label/.style={
        label2={1}{#1}
    },
    pos/.default=.5,
    arrow/.default=.5,
    scale = 0.8
]%設定終了

%複素数平面の描画
\draw[->] (-2,0) -- (4,0) node[right]{$\Re z$};

%原点の描画
\draw (0,0) node[below right]{0};

%積分経路の描画%label->posの順で設定しないと反映されない仕様
\draw[very thick,domain=3: -3,
    label={[right]{}},
    pos, arrow
    ]
plot(0.8,{\x});%posやarrowをの具体的な値を省略している

\draw[very thick,domain= -3: 3,
    label={[above right]{}}, pos, arrow=.5]
plot (1.5,{\x});%posを省略しているがarrowは省略していない

\draw[very thick,domain= 1.5: 0.8,
    label={[right]{}}, pos, arrow=.5]
plot ({\x}, 3);%posを省略しているがarrowは省略していない

\draw[very thick,domain= 0.8: 1.5,
    label={[above]{}}, pos, arrow=.5]
plot ({\x}, -3);%posを省略しているがarrowは省略していない

%座標の描画
\draw (1.15, 3) node[above]{$y = y_0$};
\draw (1.15, -3) node[below]{$y = -y_0$};
\draw (0.8, 0.5) node[left]{$x = s_3$};
\draw (1.5, 0.5) node[right]{$x = s_4$};
\draw (1.5, 1.5) node[right]{$\Omega_3(y_0)$};
\end{tikzpicture}
\caption{Integration path $\Omega_3(y_0)$}
\label{fig: integration_path_2}
\end{figure}
%--------------
% Omega_3の描画ここまで
%--------------
    Suppose that $z_0\in \bbC$ satisfies $|z_0| = 1$.
    For sufficiently large $y_0 \in \R$, the polynomial $g(z) - z_0$ has no roots and poles on $\Omega_3(y_0)$ by \cref{lem: distribution_of_zeros_on_stripes} and \cref{lem: distributions_of_zeros_on_circles}.
    Considering the integration
    \[
        I_5(y_0) \coloneqq \dfrac{1}{2\pi i}\oint_{\Omega_3(y_0)} \frac{g'(z)}{g(z) - z_0} dz
    \]
    for such $y_0$, an argument similar to that of \cref{lem: distributions_of_zeros_on_circles} shows that $g(z) - z_0$ has exactly two roots with $\Re(z) \in [s_3,s_4]$.
    This observation proves the first claim.
    The second claim follows from the fact that the polynomial
    $G_2(0,Y) = Y^3 - 13Y^2 + 51Y$
    has no real roots other than $Y=0$.
\end{proof}

The following two propositions follow from \cref{lem: distributions_of_zeros_on_circles}.
\begin{prop}\label{prop: A_B_absolute_value}
    We have the inclusions $A \subset D(0, s_1)$ and $B \subset D(0, s_2)$.
\end{prop}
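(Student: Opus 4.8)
The plan is to obtain both inclusions as immediate consequences of parts \eqref{item:lem: no_roots_in_left_hand_side} and \eqref{item:lem: no_roots_in_right_hand_side} of \cref{lem: distributions_of_zeros_on_circles}, by specializing the free parameter $z_0$ appearing there to the value of $g$ at the point under consideration. The only thing worth flagging is that the quantifier ``for all $z_0\in\bbC$ with $|z_0|=1$'' in that lemma is uniform in $z_0$, so it is legitimate to let $z_0$ depend on the point.

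For the inclusion $A\subset D(0,s_1)$, I would take any $z\in A$, so that $\Re(z)\leq 0$, i.e.\ $z\in\bbC_L$, and $|g(z)|=1$. Set $z_0\coloneqq g(z)$; then $|z_0|=1$, and $z$ is a root of the polynomial $g(w)-z_0$ in the variable $w$. By \cref{lem: distributions_of_zeros_on_circles}\,\eqref{item:lem: no_roots_in_left_hand_side}, the polynomial $g(w)-z_0$ has no root in $D_3=\bbC_L\setminus D(0,s_1)$, hence $z\notin D_3$; since $z\in\bbC_L$, this forces $z\in D(0,s_1)$. As $z\in A$ was arbitrary, $A\subset D(0,s_1)$.

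The inclusion $B\subset D(0,s_2)$ is proved in exactly the same way, with $\bbC_L$, $s_1$, $D_3$, and part \eqref{item:lem: no_roots_in_left_hand_side} replaced by $\bbC_R$, $s_2$, $D_4$, and part \eqref{item:lem: no_roots_in_right_hand_side}: for $z\in B$ one has $z\in\bbC_R$ and $|g(z)|=1$, so putting $z_0\coloneqq g(z)$ makes $z$ a root of $g(w)-z_0$ lying in $\bbC_R$, whence $z\notin D_4=\bbC_R\setminus D(0,s_2)$ and therefore $z\in D(0,s_2)$. I do not expect any real obstacle here: all of the analytic content — the winding-number computations along $\Omega_1$ and $\Omega_2$, fed by the sign estimates of \cref{lem: G_has_no_roots} — has already been discharged inside \cref{lem: distributions_of_zeros_on_circles}, and the present proposition is merely a restatement of parts \eqref{item:lem: no_roots_in_left_hand_side} and \eqref{item:lem: no_roots_in_right_hand_side} in terms of the level set $\{|g|=1\}$.
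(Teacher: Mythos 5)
Your proof is correct and is exactly the argument the paper has in mind: the paper simply asserts that the proposition "follows from" \cref{lem: distributions_of_zeros_on_circles}, and the intended reasoning is precisely your specialization $z_0=g(z)$ together with parts \eqref{item:lem: no_roots_in_left_hand_side} and \eqref{item:lem: no_roots_in_right_hand_side}. No gap or deviation.
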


\begin{prop}\label{prop: number_of_roots_in_A}
    The polynomial $\Gamma_k$ has exacly $\varphi(k)$ roots in $A$ and exactly $2\varphi(k)$ roots in $B$.
\end{prop}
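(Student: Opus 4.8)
The plan is to count the roots of $\Gamma_k$ one cyclotomic factor at a time. Writing $\Gamma_k(t)=\cyc_k(g(t))=\prod_{\zeta}(g(t)-\zeta)$, with $\zeta$ running over the $\varphi(k)$ primitive $k$-th roots of unity, I would feed each factor $g(t)-\zeta$ (which has $|\zeta|=1$) into \cref{lem: distributions_of_zeros_on_circles}. For the setup, observe that every root $\alpha$ of $\Gamma_k$ satisfies $g(\alpha)=\zeta$ for some primitive $k$-th root of unity $\zeta$, hence $|g(\alpha)|=1$; so $\alpha\in A$ when $\Re(\alpha)\le 0$ and $\alpha\in B$ when $\Re(\alpha)>0$, and $A\cap B=\emptyset$ because $\bbC_L\cap\bbC_R=\emptyset$. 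Moreover $\Gamma_k$ is separable: by \cref{lem: gamma_vs_delta} there is a bijection $Z(\Gamma_k)\xlongrightarrow{\sim}Z(\widetilde{\Delta}_{3k,3})$, and since $\widetilde{\Delta}_{3k,3}$ is separable of degree $3\varphi(k)$, its zero set --- hence also $Z(\Gamma_k)$ --- has $3\varphi(k)=\deg\Gamma_k$ elements. Finally, for a root $\alpha$ with $g(\alpha)=\zeta$ we have $\operatorname{ord}_\alpha\Gamma_k=\operatorname{ord}_\alpha(g(t)-\zeta)$, since the factors $g(t)-\zeta'$ with $\zeta'\ne\zeta$ do not vanish at $\alpha$.

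Next I would match the roots of $\Gamma_k$ lying in $A$ (resp.\ in $B$) with those lying in the region $D_1$ (resp.\ $D_2$) of \cref{lem: distributions_of_zeros_on_circles}. Fix $\varepsilon>0$ small enough for that lemma to apply and with $\varepsilon<s_3$. By \cref{prop: A_B_absolute_value} we have $A\subseteq\bbC_L\cap D(0,s_1)\subseteq D_1$, and by \cref{prop: A_B_absolute_value} and \cref{lem: A_B_real_part} every $z\in B$ satisfies $\Re(z)\ge s_3>\varepsilon$ and $|z|<s_2$, whence $B\subseteq(\bbC_R\cap D(0,s_2))\setminus D(0,\varepsilon)=D_2$. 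Conversely, a root $\alpha$ of $\Gamma_k$ in $D_1$ must have $\Re(\alpha)\le 0$: otherwise $\alpha\in\bbC_R$, hence $\alpha\in B$ and $\alpha\in D_1\cap\bbC_R\subseteq D(0,\varepsilon)$, so that $\Re(\alpha)\le|\alpha|<\varepsilon<s_3\le\Re(\alpha)$, a contradiction; therefore $\alpha\in A$. Symmetrically, every root of $\Gamma_k$ in $D_2\subseteq\bbC_R$ lies in $B$. It follows that, for each primitive $k$-th root of unity $\zeta$, the roots of $g(t)-\zeta$ in $A$ are exactly its roots in $D_1$, and its roots in $B$ are exactly its roots in $D_2$, with the same multiplicities.

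By the first two parts of \cref{lem: distributions_of_zeros_on_circles}, each $g(t)-\zeta$ has exactly one root in $D_1$ and exactly two roots in $D_2$. Summing $\operatorname{ord}_\alpha\Gamma_k=\operatorname{ord}_\alpha(g(t)-\zeta)$ over all roots $\alpha$ and all $\varphi(k)$ choices of $\zeta$ then gives $\varphi(k)$ roots of $\Gamma_k$ in $A$ and $2\varphi(k)$ roots in $B$, counted with multiplicity; by separability these are the true numbers of distinct roots. As a check, $\varphi(k)+2\varphi(k)=3\varphi(k)=\deg\Gamma_k$, so every root is accounted for.

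The one point requiring genuine care --- and the step I would treat most carefully --- is the bookkeeping around the puncture $D(0,\varepsilon)$: one must make sure that the single root of $g(t)-\zeta$ guaranteed in $D_1$ really has $\Re\le 0$, so that it lies in $A$ rather than in the thin right-hand sliver $D(0,\varepsilon)\cap\bbC_R$, and, dually, that removing $D(0,\varepsilon)$ from $D_2$ discards no point of $B$. Both are forced by the separation $\Re(B)\subseteq[s_3,s_4]$ from \cref{lem: A_B_real_part} together with the choice $\varepsilon<s_3$; everything else is a direct assembly of the lemmas already established in this section.
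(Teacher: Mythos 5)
Your proof is correct and carries out precisely what the paper leaves implicit (the paper merely notes that the proposition ``follows from \cref{lem: distributions_of_zeros_on_circles}''). Decomposing $\Gamma_k=\prod_{\zeta}(g(t)-\zeta)$, applying parts~(1) and (2) of \cref{lem: distributions_of_zeros_on_circles} to each factor, and then using $\Re(B)\subset[s_3,s_4]$ from \cref{lem: A_B_real_part} to show that the $\varepsilon$-puncture of $D_1$/$D_2$ cannot shift a root across the $A$/$B$ boundary is exactly the right bookkeeping; together with the separability of $\Gamma_k$ this gives the stated counts.
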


% --------------------------------------------------------------------------

\section{Irreducibility of the delta factors for period $3$} \label{sec: irreducibility}

% --------------------------------------------------------------------------

We continue to write $s_1 = 0.275$, $s_2 = 2.75$, $s_3 = 0.495$, and $s_4 = 0.64$.

\begin{lem}\label{lem: number of roots in A}
    Let $k\geq 2$ be an integer.
    Let $\alpha$ be a root of $\Gamma_k$ and write $n$ for the number of conjugates of $\alpha$ contained in $A$.
    Then, we have
    \[
        \dfrac{n}{\varphi(k)}\geq - \log_{10}(s_1s_2) = 0.1213\cdots.
    \]
\end{lem}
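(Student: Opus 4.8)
The plan is to bound $n$ using the constant term of the \emph{complementary} factor of $\Gamma_k$, rather than that of the minimal polynomial of $\alpha$ itself. Let $P\in\Z[t]$ be the minimal polynomial of $\alpha$ over $\Q$; since $\alpha$ is a root of the monic polynomial $\Gamma_k\in\Z[t]$ it is an algebraic integer, so $P$ is monic and $P\mid\Gamma_k$ in $\Z[t]$. As $\alpha\in Z(\Gamma_k)$, the number $g(\alpha)$ is a primitive $k$-th root of unity, whence $\Q(\zeta_k)\subset\Q(\alpha)$ and $\varphi(k)\mid\deg P$; in particular $\deg P\geq\varphi(k)$. If $P=\Gamma_k$ then \cref{prop: number_of_roots_in_A} gives $n=\varphi(k)$ and the assertion is trivial, as $-\log_{10}(s_1s_2)=0.1213\cdots<1$. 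So from now on assume $P\neq\Gamma_k$, i.e.\ $n<\varphi(k)$, and put $Q\coloneqq\Gamma_k/P$, which by Gauss's lemma lies in $\Z[t]$ and is monic of degree $3\varphi(k)-\deg P\geq1$.

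The heart of the argument is a two-sided estimate for $|Q(0)|$. Since $\Gamma_k(0)=\cyc_k(1)\neq0$, we have $Q(0)\neq0$ and hence $|Q(0)|\geq1$. For the other direction, $\Gamma_k$ is separable (its roots are in bijection with those of the separable polynomial $\widetilde{\Delta}_{3k,3}$ by \cref{lem: gamma_vs_delta}) and every root of $\Gamma_k$ lies in $A$ or in $B$; by \cref{prop: number_of_roots_in_A} exactly $\varphi(k)$ of them lie in $A$ and $2\varphi(k)$ in $B$, so $Q$ has exactly $\varphi(k)-n\geq1$ roots in $A$ and $2\varphi(k)-\deg P+n$ roots in $B$. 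Combining the inclusions $A\subset D(0,s_1)$ and $B\subset D(0,s_2)$ from \cref{prop: A_B_absolute_value} with the fact that $Q$ is monic yields
\[
    1\leq|Q(0)|=\prod_{\beta\in Z(Q)}|\beta|<s_1^{\,\varphi(k)-n}\,s_2^{\,2\varphi(k)-\deg P+n},
\]
the strictness coming from the positive exponent $\varphi(k)-n$.

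It then remains to unwind this inequality. Applying $\log_{10}$ and using the relation $\log_{10}s_1=\log_{10}s_2-1$ (which holds since $s_1=s_2/10$), the terms rearrange into
\[
    (3\varphi(k)-\deg P)\log_{10}s_2>\varphi(k)-n.
\]
Because $\deg P\geq\varphi(k)$ we have $3\varphi(k)-\deg P\leq2\varphi(k)$, and $\log_{10}s_2>0$, so
\[
    n>\varphi(k)-(3\varphi(k)-\deg P)\log_{10}s_2\geq\varphi(k)\bigl(1-2\log_{10}s_2\bigr)=-\varphi(k)\log_{10}(s_1s_2),
\]
the last equality again using $s_1s_2=s_2^2/10$. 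Dividing by $\varphi(k)$ gives the claim.

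I expect the one genuine idea to be the passage to the complementary factor $Q$: the same modulus estimate applied directly to $P(0)$ only yields an \emph{upper} bound on $n$, which is of no use here. The remaining points requiring a little care are the separability of $\Gamma_k$, which makes the $A$/$B$ split of the roots of $Q$ exact, and keeping the exponent $\varphi(k)-n$ strictly positive, which is exactly why the case $P=\Gamma_k$ (equivalently $n=\varphi(k)$) must be disposed of first.
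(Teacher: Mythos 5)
Your proof is essentially the paper's proof: both bound the constant term of the complementary factor $Q=\Gamma_k/P$ from below by $1$ and from above using $A\subset D(0,s_1)$, $B\subset D(0,s_2)$, and $\deg P\geq\varphi(k)$, then take logarithms. One small slip worth noting: $P\neq\Gamma_k$ does \emph{not} force $n<\varphi(k)$ (a proper factor $Q$ could in principle have all its roots in $B$), so the parenthetical ``i.e.'' and the attribution of strictness to the exponent $\varphi(k)-n$ are off; but this is harmless, since the discs are open and $\deg Q\geq 1$ already gives strictness, and in fact the paper sidesteps the issue entirely by working with non-strict inequalities and hence needing no case split.
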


\begin{proof}
    Let $f_1$ be the minimal polynomial of $\alpha$ and write $\Gamma_k = f_1 f_2$.
    Set $d = \deg f_1$.
    By \cref{lem: parabolic parameter involves cyclotomic field} and \cref{lem: gamma_vs_delta}, we have $d \geq \varphi(k)$.
    By \cref{prop: number_of_roots_in_A}, the number of roots of $f_1$ and $f_2$ contained in $A$ and $B$ are given as follows:
    \begin{table}[h]
        \begin{tabular}{c|ccc}
                  & $A$ & & $B$\\ \hline
            $f_1$ & $n$ & & $d-n$\\
            $f_2$ & $\varphi(k)-n$ & & $2\varphi(k)-d+n$
        \end{tabular}
    \end{table}
    
    Using \cref{prop: A_B_absolute_value} and $d \geq \varphi(k)$, we get the inequalities
    \[
        1 \leq |f_2(0)|
        \leq s_1^{\varphi(k)-n} s_2^{2\varphi(k) - d + n}
        \leq s_1^{\varphi(k)-n} s_2^{\varphi(k) + n}
        = (s_1s_2)^{\varphi(k)} \cdot 10^n.
    \]
    By taking the logarithm of both sides, we get the desired inequality.
\end{proof}

\begin{lem}\label{lem: small totally real integers}
    Let $\alpha$ be a totally real algebraic integer.
    If all conjugates of $\alpha$ are contained in $(-\sqrt{2}, \sqrt{2})$, then we have $\alpha \in \{-1,0,1\}$.
\end{lem}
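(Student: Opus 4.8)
The plan is to reduce the statement to Kronecker's theorem, and the only real idea needed is the substitution $\beta := \alpha^2 - 1$. Since $\alpha$ is a totally real algebraic integer, so is $\beta$, and every conjugate of $\beta$ over $\mathbb{Q}$ has the form $\sigma(\alpha)^2 - 1$ for some embedding $\sigma$, where $\sigma(\alpha)$ is again a conjugate of $\alpha$. By hypothesis $\sigma(\alpha)\in(-\sqrt{2},\sqrt{2})$, so $\sigma(\alpha)^2\in[0,2)$ and hence every conjugate of $\beta$ lies in $[-1,1)$; in particular all conjugates of $\beta$ have absolute value at most $1$.

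Now I would invoke Kronecker's theorem: an algebraic integer all of whose conjugates lie in the closed unit disc is either $0$ or a root of unity. Since $\beta$ is totally real and the only real roots of unity are $\pm 1$, we get $\beta\in\{-1,0,1\}$; but every conjugate of $\beta$ is strictly smaller than $1$, so $\beta\neq 1$ and thus $\beta\in\{-1,0\}$. Unwinding the substitution: $\beta = 0$ forces $\alpha^2 = 1$, i.e. $\alpha=\pm 1$, while $\beta = -1$ forces $\alpha^2 = 0$, i.e. $\alpha = 0$. Hence $\alpha\in\{-1,0,1\}$, as claimed.

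If one prefers to avoid citing Kronecker, the needed instance is easy to prove directly: writing $d=\deg\beta$, for every $n\geq 1$ the power $\beta^n$ is an algebraic integer of degree at most $d$ whose conjugates all lie in the closed unit disc, so its minimal polynomial is monic with integer coefficients and its coefficient of $x^{d-j}$ is bounded in absolute value by $\binom{d}{j}$; there are only finitely many such polynomials, so $\{\beta^n : n\geq 1\}$ is a finite set, giving $\beta^m = \beta^n$ for some $m<n$ and therefore $\beta = 0$ or $\beta^{\,n-m}=1$. Either way $\beta$ is $0$ or a real root of unity, and the argument concludes as before. There is essentially no obstacle here beyond spotting the substitution $\alpha\mapsto\alpha^2-1$, which converts the bound $\sqrt{2}$ into the unit-disc bound where Kronecker applies.
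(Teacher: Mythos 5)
Your proof is correct, and it hinges on exactly the same key idea as the paper's: the substitution $\beta=\alpha^2-1$, which translates the bound $|\sigma(\alpha)|<\sqrt2$ into the bound $\sigma(\beta)\in[-1,1)$. Where you then invoke Kronecker's theorem (all conjugates in the closed unit disc implies $\beta$ is $0$ or a root of unity), the paper finishes more directly: it notes that if $\alpha\neq 0$ then in fact every conjugate of $\beta$ lies strictly in $(-1,1)$, so the norm of $\beta$ is an integer in $(-1,1)\cap\mathbb{Z}=\{0\}$, forcing $\beta=0$. The two conclusions are equivalent, and both are short; the paper's version just avoids citing (or reproving) Kronecker by using a single norm computation, while yours appeals to the standard theorem. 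Either is perfectly acceptable, and the observation that $\alpha\mapsto\alpha^2-1$ is the right change of variable is the only real content of the lemma.
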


\begin{proof}
    Let $\beta = \alpha^2-1$.
    Our assumption implies that all conjugates of $\beta$ are contained in $[-1, 1)$.
    If $\alpha \neq 0$, then we have $\beta \neq -1$, so the product of all conjugates of $\beta$ is contained in $(-1,1) \cap \mathbb{Z} = \{0\}$.
    This implies $\beta = 0$ and hence $\alpha \in \{-1, 1\}$.
\end{proof}

Let $\mathbb{Q}^\ab$ denote the maximal abelian extension of $\mathbb{Q}$.

\begin{thm}\label{thm: delta 3 has no abelian roots}
    For any $k\geq 2$, the polynomial $\widetilde{\Delta}_{3k,3}$ has no roots in $\mathbb{Q}^\ab$.
\end{thm}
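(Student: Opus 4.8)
The plan is to pass to the polynomial $\Gamma_k$ via \cref{lem: gamma_vs_delta} and then feed the distribution estimates of \cref{sec: distribution} into \cref{lem: small totally real integers}. Suppose, for contradiction, that $\gamma_0\in Z(\widetilde{\Delta}_{3k,3})\cap\mathbb{Q}^{\ab}$. By \cref{lem: gamma_vs_delta} we may write $\gamma_0=-\alpha_0^2-7$ for some root $\alpha_0$ of $\Gamma_k$, and we set $\zeta_0=g(\alpha_0)=\alpha_0^3-\alpha_0^2+7\alpha_0+1$, which by definition of $\Gamma_k$ is a primitive $k$-th root of unity, so $\zeta_0\in\mathbb{Q}^{\ab}$.

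The first, and crucial, step is to promote $\gamma_0\in\mathbb{Q}^{\ab}$ to $\alpha_0\in\mathbb{Q}^{\ab}$. This is not automatic, since $\alpha_0$ is essentially a square root of $-\gamma_0-7$ and square roots of abelian numbers need not be abelian. The remedy is the linear relation obtained by combining $g(t)=t(t^2+7)-t^2+1$ with $\alpha_0^2=-\gamma_0-7$, namely
\[
    \alpha_0\gamma_0=\gamma_0+8-\zeta_0.
\]
Since $\gamma_0\neq 0$ (otherwise this relation would give $\zeta_0=8$, which is impossible as $|\zeta_0|=1$), we conclude $\alpha_0=(\gamma_0+8-\zeta_0)/\gamma_0\in\mathbb{Q}^{\ab}$.

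Next I would exploit the abelian hypothesis to manufacture a small totally real algebraic integer. Since $\mathbb{Q}(\alpha_0)/\mathbb{Q}$ is abelian it is Galois, hence stable under complex conjugation, and complex conjugation commutes with every element of its Galois group; hence for each $\sigma\in\Gal(\overline{\mathbb{Q}}/\mathbb{Q})$ we get $\sigma(\overline{\alpha_0})=\overline{\sigma(\alpha_0)}$, so the conjugates of $\beta_0:=\alpha_0+\overline{\alpha_0}=2\Re(\alpha_0)$ are exactly the numbers $2\Re(\sigma(\alpha_0))$. Each $\sigma(\alpha_0)$ is a root of $\Gamma_k$, hence lies in $A\cup B$, so by \cref{lem: A_B_real_part} its real part lies in $[-s_1,0]\cup[s_3,s_4]$. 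Therefore $\beta_0$ is a totally real algebraic integer all of whose conjugates lie in $[-2s_1,2s_4]\subset(-\sqrt{2},\sqrt{2})$, and \cref{lem: small totally real integers} forces $\beta_0\in\{-1,0,1\}$. To finish, note that $\beta_0\in\mathbb{Q}$, so it equals each of its conjugates; by \cref{lem: number of roots in A} the number of conjugates of $\alpha_0$ lying in $A$ is strictly positive, so there is a conjugate $\alpha_1\in A$, and then $\beta_0=2\Re(\alpha_1)\in[-2s_1,0]$, which forces $\beta_0=0$. Hence $\Re(\alpha_1)=0$ with $\alpha_1\in A$, and by \cref{lem: A_B_real_part} this gives $\alpha_1=0$, contradicting $\Gamma_k(0)=\cyc_k(1)\neq 0$.

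I expect the main obstacle to be precisely the first step: the passage from $\gamma_0\in\mathbb{Q}^{\ab}$ to $\alpha_0\in\mathbb{Q}^{\ab}$. Without the linear relation among $\alpha_0$, $\gamma_0$, and $\zeta_0$ there is no reason for $\alpha_0$ to be abelian, and the rest of the argument would not even get started. Everything after that is routine bookkeeping with the explicit constants $s_1=0.275$, $s_3=0.495$, $s_4=0.64$ and the elementary inequality $2s_4<\sqrt{2}$.
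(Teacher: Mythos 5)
Your proof is correct and is essentially the paper's argument: construct $\beta_0 = 2\Re(\alpha_0)$, observe via Lemma~\ref{lem: A_B_real_part} that all its conjugates lie in $[-2s_1,2s_4]\subset(-\sqrt2,\sqrt2)$, then combine Lemma~\ref{lem: small totally real integers} with Lemma~\ref{lem: number of roots in A} to reach a contradiction. The one place you diverge is the reduction from $\widetilde{\Delta}_{3k,3}$ to $\Gamma_k$: you were right to flag that $\alpha_0$ being abelian is not a consequence of $\gamma_0=-\alpha_0^2-7$ being abelian \emph{in general}, but the worry is already resolved by the paper's setup. Lemma~\ref{lem: gamma_vs_delta} gives a \emph{Galois-equivariant bijection} $Z(\Gamma_k)\to Z(\widetilde{\Delta}_{3k,3})$, and any $\Gal(\overline{\Q}/\Q)$-equivariant bijection of finite $\Gal(\overline{\Q}/\Q)$-sets preserves stabilizers, hence fields of definition: if $\sigma$ fixes $\gamma_0$ then $\sigma(\alpha_0)$ also maps to $\gamma_0$, and injectivity forces $\sigma(\alpha_0)=\alpha_0$, so $\Q(\alpha_0)=\Q(\gamma_0)$. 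Your explicit linear relation $\alpha_0\gamma_0=\gamma_0+8-\zeta_0$ is a concrete and appealing way to exhibit this equality of fields, but it is not a logically necessary supplement once the Galois-equivariance of the bijection has been recorded. The endgame is also reorganized slightly relative to the paper (the paper first shows $\beta\notin\{-1,0,1\}$ and then contradicts Lemma~\ref{lem: small totally real integers}, while you first pin down $\beta_0\in\{-1,0,1\}$ and then rule each value out), but the two orderings are logically equivalent and use the same inputs.
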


\begin{proof}
    By \cref{lem: gamma_vs_delta}, there exists a Galois-equivariant bijection between the set of roots of $\widetilde{\Delta}_{3k,3}$ and that of $\Gamma_k$.
    Since an algebraic number is contained in $\mathbb{Q}^\ab$ if and only if it is fixed by the action of the group $\Gal(\overline{\mathbb{Q}}/\mathbb{Q}^\ab)$, it suffices to show that the polynomial $\Gamma_k$ has no roots in $\mathbb{Q}^\ab$.
    Suppose that $\alpha \in \mathbb{Q}^\ab$ is a root of $\Gamma_k$.
    Then, for any $\sigma \in \Gal(\mathbb{Q}^\ab/\mathbb{Q})$, we have
    \[
        \sigma(\alpha + \overline{\alpha})
        = \sigma(\alpha) + \overline{\sigma(\alpha)}
        = 2\Re(\sigma(\alpha)).
    \]
    In particular, the number $\beta = \alpha + \overline{\alpha}$ is a totally real algebraic integer.
    By \cref{lem: number of roots in A}, there is some $\sigma \in \Gal(\mathbb{Q}^\ab/\mathbb{Q})$ such that $\sigma(\alpha) \in A$.
    By \cref{lem: A_B_real_part}, such $\sigma$ satisfies
    \begin{align}\label{eq: beta estimate 1}
        \sigma(\beta) = 2\Re(\sigma(\alpha)) \in [-2s_1, 0),
    \end{align}
    so it follows that $\beta\not\in \{-1, 0, 1\}$.
    On the other hand, for any $\sigma \in \Gal(\mathbb{Q}^\ab/\mathbb{Q})$ with $\sigma(\alpha) \in B$, we have
    \begin{align}\label{eq: beta estimate 2}
        \sigma(\beta) = 2\Re(\sigma(\alpha))\in [2s_3, 2s_4]
    \end{align}
    by \cref{lem: A_B_real_part}.
    In particular, all conjugates of $\beta$ are contained in $(-\sqrt{2}, \sqrt{2})$.
    By \cref{lem: small totally real integers}, this leads to a contradiction.
\end{proof}

\begin{rem}
    In the proof of \cref{thm: delta 3 has no abelian roots}, instead of using \cref{lem: small totally real integers}, we can also use a general theorem concerning the height of algebraic integers.
    Let $d$ be the degree of $\beta$ over $\mathbb{Q}$ and $n$ be the number of conjugates of $\beta$ contained in $A$.
    Using the estimates \cref{eq: beta estimate 1}, \cref{eq: beta estimate 2}, and \cref{lem: number of roots in A}, the absolute logarithmic height $h(\beta)$ of $\beta$ can be bounded as follows:
    \begin{align}
        h(\beta) \leq \dfrac{1}{d}(d - n) \log (2s_4)
        \leq \biggl(1-\dfrac{-\log_{10}(s_1s_2)}{3}\biggr)\log (2s_4)
        = 0.2368\cdots.
    \end{align}
    This inequality contradicts the following theorem.
\end{rem}
\begin{thm}[Schinzel \cite{Schinzel73}]
    Let $\alpha$ be a totally real algebraic integer.
    If $\alpha\not\in \{-1, 0, 1\}$, then we have the inequality
    \[
        h(\alpha)\geq \dfrac{1}{2}\log\dfrac{1 + \sqrt{5}}{2} = 0.2406\cdots.
    \]
\end{thm}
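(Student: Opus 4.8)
The result is classical, so one option is simply to cite \cite{Schinzel73}; but here is how I would reconstruct it, by the auxiliary polynomial method (linear forms in the logarithms of the conjugates). Throughout write $\tau=\tfrac{1+\sqrt5}{2}$.

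\textbf{Reduction to totally positive integers.} The absolute Weil height satisfies $h(\gamma^2)=2h(\gamma)$ for $\gamma\neq 0$; moreover $\gamma\mapsto\gamma^2$ sends totally real algebraic integers $\notin\{-1,0,1\}$ to totally positive algebraic integers $\notin\{0,1\}$, and conversely every totally positive algebraic integer $\beta\notin\{0,1\}$ is the square of such a $\gamma$ (take $\gamma=\sqrt{\beta}$, a root of $P(x^2)$ with $P$ the minimal polynomial of $\beta$). Hence the statement is equivalent to: if $\beta$ is a totally positive algebraic integer of degree $m$ with conjugates $\beta_1,\dots,\beta_m>0$ and $\beta\notin\{0,1\}$, then $\sum_{i}\log^+\beta_i\geq m\log\tau$ (the left side being $m\,h(\beta)$). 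The extremal case is the $\beta$ with minimal polynomial $x^2-3x+1$, whose roots are $\tau^{\pm2}$.

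\textbf{Resultant inequalities.} Let $P$ be the minimal polynomial of $\beta$. For any monic $Q\in\mathbb{Z}[x]$ with $\gcd(P,Q)=1$, the number $\prod_i Q(\beta_i)=\pm\Res(P,Q)$ is a nonzero rational integer, so $\sum_i\log|Q(\beta_i)|\geq 0$. I would apply this with $Q=x$, $x-1$, $x^2-3x+1$, $x-3$ (and, if necessary, one or two more), after first disposing of the finitely many $\beta$ that are roots of one of these: each such $\beta$ has degree $\le 2$ and is handled directly, the only equality case being $\beta=\tau^{\pm2}$, for which $\max(1,\beta_1)\max(1,\beta_2)=\tau^{2}$. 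For every remaining $\beta$ this yields
\[
\sum_i\log\beta_i\geq 0,\qquad \sum_i\log|\beta_i-1|\geq 0,\qquad \sum_i\log|\beta_i^2-3\beta_i+1|\geq 0,\qquad \sum_i\log|\beta_i-3|\geq 0.
\]

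\textbf{The pointwise inequality, and the main obstacle.} The plan is then to choose $\lambda_0,\lambda_1,\lambda_2,\lambda_3\geq 0$ with $\lambda_0+\lambda_1+2\lambda_2+\lambda_3<1$ such that, for all real $t>0$,
\[
\log^+ t\ \geq\ \lambda_0\log t+\lambda_1\log|t-1|+\lambda_2\log|t^2-3t+1|+\lambda_3\log|t-3|+\log\tau ;
\]
summing this over $t=\beta_1,\dots,\beta_m$ and inserting the four inequalities above gives $\sum_i\log^+\beta_i\geq m\log\tau$, as required. The strict constraint $\lambda_0+\lambda_1+2\lambda_2+\lambda_3<1$ forces the left side to dominate as $t\to\infty$; the polynomials $x,x-1,x^2-3x+1,x-3$ are chosen because their roots $0,1,\tau^{\pm2},3$ are exactly the points where the inequality is tightest ($\tau^{\pm2}$ being the conjugates of the extremal $\beta$, and $t=3$ the value at which a cruder list of polynomials narrowly fails), so that at each of these points the corresponding $\log$-term must tend to $-\infty$ to keep the right side small enough. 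Once admissible $\lambda_i$ are fixed, the function $F(t)=(\text{LHS})-(\text{RHS})$ tends to $+\infty$ at each of $0^+,1,\tau^{\pm2},3,\infty$, and it only remains to verify $F\geq 0$ at its finitely many interior critical points in $(0,\infty)$ — a one-variable calculus check. The real difficulty is precisely the existence of admissible $\lambda_i$ that yield the \emph{sharp} constant $\log\tau$, not merely something a little smaller: using only $Q=x$ and $Q=x-1$ one gets just $h(\beta)\geq\tfrac23\log 2<\log\tau$, and lengthening the list improves the bound only in small increments. Pinning down the correct finite list of auxiliary polynomials together with the optimal weights — that is, solving the associated extremal problem exactly — is Schinzel's computation in \cite{Schinzel73}, which for the purposes of this paper we simply cite.
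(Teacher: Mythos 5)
The paper itself does not prove this theorem: it is invoked purely as a cited external result (and moreover the authors' main argument avoids it altogether, using \cref{lem: small totally real integers} instead, with the Schinzel bound offered only in a remark as an alternative route). So there is no in-paper proof for your sketch to be compared against, and deferring to \cite{Schinzel73} is exactly what the paper does.

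As for the sketch itself, it is structurally sound as a heuristic: the reduction $h(\gamma^2)=2h(\gamma)$ to totally positive integers with threshold $\log\tau$ is correct, the resultant inequalities $\sum_i\log\lvert Q(\beta_i)\rvert\ge 0$ for monic $Q\in\mathbb{Z}[x]$ coprime to the minimal polynomial are correct, and the pointwise-inequality strategy with $\sum\lambda_j\deg Q_j<1$ is the standard auxiliary-polynomial machine (Smyth-style). Your identification of $x^2-3x+1$ (roots $\tau^{\pm 2}$) as the extremal case and of $t=3$ as a trouble spot for a short list of polynomials is also correct. But the step you explicitly leave open --- exhibiting admissible weights $\lambda_0,\dots,\lambda_3$ for which the pointwise inequality actually holds with constant exactly $\log\tau$, and then running the calculus check --- is the entire mathematical content of the result, and without it this is an outline rather than a proof. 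Two smaller cautions: the list $\{x,x-1,x^2-3x+1,x-3\}$ is a plausible guess, not a verified one, so the claim that it suffices should be flagged as conjectural until weights are produced; and this reconstruction reads like the later systematic auxiliary-polynomial approach rather than Schinzel's original 1973 argument, which is more elementary, so one should not present it as a faithful account of \cite{Schinzel73}. For the purposes of this paper, simply citing the theorem (as both you and the authors do) is the right move.
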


%\begin{thm}\label{thm: irreducibility for period 3}
%    The polynomial $\widetilde{\Delta}_{3k,3}$ irreducible over $ \Q $.
%\end{thm}

\begin{proof}[Proof of $ \cref{thm: irreducibility for period 3} $]
    Since we have $\widetilde{\Delta}_{3,3} = C+7$, we may assume that $k\geq 2$.
    Suppose that we have $\widetilde{\Delta}_{3k,3}=f_1 f_2$ for some monic polynomials $f_1, f_2\in \mathbb{Q}(\zeta_{\ell})[t]$ with positive degrees.
    For any root $\alpha$ of $\widetilde{\Delta}_{3k,3}$, the field $\mathbb{Q}(\zeta_\ell)(\alpha)$ contains $\mathbb{Q}(\zeta_\ell)(\zeta_k) = \mathbb{Q}(\zeta_{k\ell})$ by \cref{lem: parabolic parameter involves cyclotomic field}. Hence, the extension degree $[\Q(\zeta_{\ell})(\alpha):\Q(\zeta_\ell)]$ is divided by $[\Q(\zeta_{\ell})(\zeta_k):\Q(\zeta_{\ell})] = \varphi(k)$, where note that we are assuming $(k,\ell) = 1$.
    In particular, $\varphi(k)$ divides both $\deg f_1$ and $\deg f_2$.
    Since we have $\deg f_1 + \deg f_2 = \deg \widetilde{\Delta}_{3k,3} = 3\varphi(k)$, we may assume that $\deg f_1 = \varphi(k)$ and $\deg f_2 = 2\varphi(k)$ hold.
    This equality implies that we have $\Q(\zeta_{\ell})(\alpha') = \Q(\zeta_{\ell})(\zeta_k) = \Q(\zeta_{k\ell})$ for a root $\alpha'$ of $f_1$.
    However, this contradicts \cref{thm: delta 3 has no abelian roots}.
\end{proof}

%\begin{proof}
%    For the reader's convenience, we include a short proof of this theorem due to H\"ohn-Skoruppa \cite{HS93}.
%    Let $ \phi $ be the monic minimal polynomial of $ \alpha $ and $ \alpha_1 = \alpha, \alpha_2, \dots, \alpha_n $ be conjugates of  $ \alpha $ over $ \Q $.
%    Then, the values
%    \[ \phi(0) = (-1)^n \prod_{i=1}^{n} \alpha_i,
%    \quad
%    \phi(1) = \prod_{i=1}^{n} (1 - \alpha_i),
%    \quad
%    \phi(-1) = (-1)^n \prod_{i=1}^{n} (1 + \alpha_i) \]
%    are all non-zero integers. Since the inequality
%    \[ \biggl(\frac{1 + \sqrt{5}}{2}\biggr)^{-1/2} \max \{1, |x|\} \ge |x|^{1/2} |x - 1/x|^{1/(2 \sqrt{5})} \]
%    holds for all $ x \in \R $, we get the inequalities
%    \begin{align}
%        \biggl(\frac{1 + \sqrt{5}}{2}\biggr)^{-1/2} H(\alpha) 
%        & = \biggl( \prod_{i=1}^{n} \biggl(\frac{1 + \sqrt{5}}{2}\biggr)^{-1/2}  \max\{1, |\alpha_i|\}\biggr)^{1/n} \\
%        & \ge \prod_{i=1}^{n} |\alpha_i|^{\frac{1}{2n}} |\alpha_i - 1/\alpha_i|^{\frac{1}{2n\sqrt{5}}} \\
%        & = \prod_{i=1}^{n} |\alpha_i|^{\frac{1}{2n} - \frac{1}{2n\sqrt{5}}} |\alpha_i^2 - 1|^{\frac{1}{2n\sqrt{5}}} \\
%        & = |\phi(0)|^{\frac{1}{2n} - \frac{1}{2n\sqrt{5}}} |\phi(1) \phi(-1)|^{\frac{1}{2n\sqrt{5}}}  \\
%        & \ge 1.
%    \end{align}
%    Hence, the assertion follows.
%\end{proof}

% --------------------------------------------------------------------------

\section{Higher periods} \label{sec: higher periods}

% --------------------------------------------------------------------------

In this section, we prove \cref{thm: irreducibility inherited}.
Throughout this section, we fix integers $ e, m \ge 1 $ and $ k \ge 2 $ and a prime number $ p $ satisfying $ p \nmid k $.

In our proof, we demonstrate that any irreducible component of $ \widetilde{\Delta}_{mkp^e, m}(C) $ is equal to $ \widetilde{\Delta}_{mkp^e, m}(C) $ by counting the number of its roots in two different ways. 
To achieve this, we prepare the following two lemmata.

\begin{lem}\label{lem: mod_p_decomposition_Delta}
	We have the congruence relation
	\[ 
        \widetilde{\Delta}_{mkp^e, m}(C)
        \equiv\widetilde{\Delta}_{mk, m}(C)^{\varphi(p^e)} \bmod p.
	\]
\end{lem}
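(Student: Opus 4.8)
The plan is to express both sides as resultants in the variable $x$ and reduce modulo $p$. Since $k\ge 2$, we have $m<mk$ and $m<mkp^e$, so the defining equation \eqref{eq: Delta} applies verbatim and gives
\[
    \widetilde{\Delta}_{mk,m}(C)=\Res_x\bigl(\cyc_k(x),\widetilde{\delta}_m(x,C)\bigr),
    \qquad
    \widetilde{\Delta}_{mkp^e,m}(C)=\Res_x\bigl(\cyc_{kp^e}(x),\widetilde{\delta}_m(x,C)\bigr).
\]
Here $\widetilde{\delta}_m(x,C)=\delta_m(x,C/4)$ lies in $\mathbb{Z}[C,x]$ and is monic in $x$, while $\cyc_j(x)\in\mathbb{Z}[x]$ is monic, so both resultants are genuine elements of $\mathbb{Z}[C]$ and no degree-in-$x$ pathology can occur.

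First I would record the purely cyclotomic congruence
\[
    \cyc_{kp^e}(x)\equiv \cyc_k(x)^{\varphi(p^e)}\pmod p,
\]
which holds because $p\nmid k$. It follows from the standard identities $\cyc_{pk}(x)=\cyc_k(x^p)/\cyc_k(x)$ (valid since $p\nmid k$) and $\cyc_{p^ek}(x)=\cyc_{pk}(x^{p^{e-1}})$, together with the Frobenius relation $h(x^p)\equiv h(x)^p\pmod p$ for $h\in\mathbb{Z}[x]$: reducing the first identity modulo $p$ gives $\cyc_{pk}(x)\equiv\cyc_k(x)^{p-1}$, and then $\cyc_{p^ek}(x)\equiv\cyc_{pk}(x)^{p^{e-1}}\equiv\cyc_k(x)^{(p-1)p^{e-1}}=\cyc_k(x)^{\varphi(p^e)}\pmod p$.

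Finally I would combine this with two standard properties of resultants. Forming $\Res_x$ of two polynomials that are monic in $x$ commutes with any ring homomorphism applied to the coefficients, in particular with the reduction $\mathbb{Z}[C]\to\mathbb{F}_p[C]$, since the resultant is the Sylvester determinant and the leading coefficients (both equal to $1$) are not killed, so no drop in $x$-degree occurs. Using this and multiplicativity, $\Res_x(fg,h)=\Res_x(f,h)\Res_x(g,h)$, I reduce the formula for $\widetilde{\Delta}_{mkp^e,m}$ modulo $p$ and substitute the cyclotomic congruence to obtain
\[
    \widetilde{\Delta}_{mkp^e,m}(C)\equiv\Res_x\bigl(\cyc_k(x)^{\varphi(p^e)},\widetilde{\delta}_m(x,C)\bigr)
    =\Res_x\bigl(\cyc_k(x),\widetilde{\delta}_m(x,C)\bigr)^{\varphi(p^e)}
    =\widetilde{\Delta}_{mk,m}(C)^{\varphi(p^e)}\pmod p,
\]
which is the assertion. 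I do not expect a genuine obstacle: the only point needing care is that all $x$-degrees are preserved under reduction modulo $p$ (they are, by monicity of $\cyc_j$ and of $\widetilde{\delta}_m$ in $x$), and the lemma is essentially bookkeeping whose role is to set up the root count in the proof of \cref{thm: irreducibility inherited}.
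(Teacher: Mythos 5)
Your proof is correct and takes essentially the same route as the paper: express both delta factors as resultants against cyclotomic polynomials, reduce modulo $p$ using the congruence $\cyc_{kp^e}(x)\equiv\cyc_k(x)^{\varphi(p^e)}\pmod p$ (which you derive by a slightly different but equivalent chain of cyclotomic identities than the paper's single identity $\cyc_{kp^e}(x)=\cyc_k(x^{p^e})/\cyc_k(x^{p^{e-1}})$), and then invoke multiplicativity of the resultant. The extra care you take in checking that monicity in $x$ makes the resultant commute with reduction mod $p$ is a point the paper leaves implicit, but the substance of the argument is identical.
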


\begin{proof}
    Since $ \cyc_{kp^e}(x) = \cyc_{k}(x^{p^e}) / \cyc_k(x^{p^{e-1}}) \equiv \cyc_{k}(x)^{\varphi(p^e)} \bmod p $, we have
    \begin{align}
        \widetilde{\Delta}_{mkp^e, m}(C) &= \Resultant_x (\cyc_{kp^e}(x), \widetilde{\delta}_{m}(x, C)) \\
        & \equiv \Resultant_x (\cyc_{k}(x)^{\varphi(p^e)}, \widetilde{\delta}_{m}(x, C)) \bmod p\\
        & = \Resultant_x (\cyc_{k}(x), \widetilde{\delta}_{m}(x, C))^{\varphi(p^e)} \\
        & = \widetilde{\Delta}_{mk,m}(C)^{\varphi(p^e)}.
    \end{align}
\end{proof}

\begin{lem}\label{lem: mod_p_decomposition_irr_comp}
	For a monic irreducible factor $ f(C) $ of $ \widetilde{\Delta}_{mkp^e, m} (C) $ over $ \Q $, there exists a polynomial $ F(C) \in \overline{\bbF}_p [C] $ such that $ f(C) \equiv F(C)^{\varphi(p^e)} \bmod p $.
\end{lem}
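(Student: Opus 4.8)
The plan is to pass to a splitting field and read off the multiplicities of the roots of $f \bmod p$ from the orbits of an inertia group. First I would note that, since $\widetilde{\Delta}_{mkp^e,m}$ is monic in $\mathbb{Z}[C]$, Gauss's lemma gives $f\in\mathbb{Z}[C]$, so reduction modulo $p$ is meaningful and every root of $f$ is an algebraic integer. Fix a root $\gamma$ of $f$ in $\overline{\mathbb{Q}}$, let $L$ be the Galois closure of $\mathbb{Q}(\gamma)/\mathbb{Q}$, and set $G=\Gal(L/\mathbb{Q})$. Applying \cref{lem: parabolic parameter involves cyclotomic field} with $k$ replaced by $kp^e$ (allowed since $kp^e\geq 2$), we get $\mathbb{Q}(\zeta_{kp^e})\subseteq\mathbb{Q}(\gamma')\subseteq L$ for every root $\gamma'$ of $f$; put $H=\Gal(L/\mathbb{Q}(\zeta_{kp^e}))$. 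The set $R_f$ of roots of $f$ is a single $G$-orbit. Fix a prime $\mathfrak{P}$ of $\mathcal{O}_L$ above $p$ with residue map $\pi\colon\mathcal{O}_L\to\overline{\mathbb{F}}_p$. Since $\mathbb{Q}$ has characteristic zero, $f$ is separable, so the reduction of $f$ modulo $p$ equals $\prod_{\gamma'\in R_f}(C-\pi(\gamma'))$ and the multiplicity of $\delta\in\overline{\mathbb{F}}_p$ as a root of $f\bmod p$ equals $m_\delta:=\#\{\gamma'\in R_f\mid\pi(\gamma')=\delta\}$. It therefore suffices to prove $\varphi(p^e)\mid m_\delta$ for every $\delta$, for then $f\equiv F^{\varphi(p^e)}\bmod p$ with $F:=\prod_\delta(C-\delta)^{m_\delta/\varphi(p^e)}\in\overline{\mathbb{F}}_p[C]$.

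Then I would bring in the inertia group $I=I(\mathfrak{P}/p)\leq G$. Its elements fix $\mathfrak{P}$ and act trivially on $\mathcal{O}_L/\mathfrak{P}$, hence $\pi(\sigma\gamma')=\pi(\gamma')$ for all $\sigma\in I$, so $I$ permutes each fiber $R_f^{\delta}:=\{\gamma'\in R_f\mid\pi(\gamma')=\delta\}$. Decomposing $R_f^{\delta}$ into $I$-orbits, the orbit of $\gamma'$ has size $[I:I\cap\Gal(L/\mathbb{Q}(\gamma'))]$; since $\mathbb{Q}(\zeta_{kp^e})\subseteq\mathbb{Q}(\gamma')$ forces $\Gal(L/\mathbb{Q}(\gamma'))\subseteq H$, this orbit size is a multiple of $[I:I\cap H]$, and hence so is $m_\delta=|R_f^{\delta}|$.

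It remains to identify $[I:I\cap H]=\varphi(p^e)$. The restriction map $G\to\Gal(\mathbb{Q}(\zeta_{kp^e})/\mathbb{Q})$ is surjective with kernel $H$ and carries $I$ onto the inertia subgroup of $p$ in $\Gal(\mathbb{Q}(\zeta_{kp^e})/\mathbb{Q})$ (a standard compatibility of decomposition and inertia groups with quotients), so $[I:I\cap H]$ equals the ramification index of $p$ in $\mathbb{Q}(\zeta_{kp^e})/\mathbb{Q}$. Because $p\nmid k$, the subfield $\mathbb{Q}(\zeta_k)$ is unramified at $p$ while $\mathbb{Q}(\zeta_{p^e})$ is totally ramified at $p$ of degree $\varphi(p^e)$, so that ramification index is $\varphi(p^e)$. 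This gives $\varphi(p^e)\mid m_\delta$ for all $\delta$, finishing the argument.

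I expect the one point requiring care to be the bookkeeping that translates the multiplicity of a root of $f\bmod p$ into the size of an inertia-stable fiber, together with the containment $\Gal(L/\mathbb{Q}(\gamma'))\subseteq H$, which is cheap thanks to \cref{lem: parabolic parameter involves cyclotomic field}. The congruence of \cref{lem: mod_p_decomposition_Delta} is a useful consistency check, since it shows $f\bmod p$ divides $(\widetilde{\Delta}_{mk,m}\bmod p)^{\varphi(p^e)}$, but it does not by itself force $f\bmod p$ to be a $\varphi(p^e)$-th power, which is precisely why the inertia argument is needed; a more classical route through Dedekind's factorization criterion would instead demand control of whether $p$ divides the index $[\mathcal{O}_{\mathbb{Q}(\gamma)}:\mathbb{Z}[\gamma]]$, which the Galois-theoretic formulation sidesteps.
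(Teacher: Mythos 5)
Your proof is correct, but it packages the argument differently from the paper. The paper works directly over $K=\Q(\zeta_{kp^e})$: it lets $h$ be the minimal polynomial of a root $\alpha$ over $K$, proves $f=\prod_{\sigma\in\Gal(K/\Q)}\sigma(h)$ (using $\Q(\zeta_{kp^e})\subseteq\Q(\alpha)$ to show the conjugates $\sigma(h)$ are pairwise distinct), and then groups the factors into cosets of $\Gal(K/K')$ with $K'=\Q(\zeta_k)$; since $p$ is totally ramified in $K/K'$, the inertia action is trivial on $\calO_K/\frakP$, and abelianness of $K/\Q$ gives $\sigma\tau(h)\equiv\sigma(h)\bmod\frakP$, yielding an explicit $F=\prod_{\sigma\in\Gal(K/K')}\sigma(h)\bmod\frakP$. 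You instead pass to the Galois closure $L$ of $\Q(\alpha)$, reinterpret the multiplicity of each root of $f\bmod p$ as the size of a $\pi$-fiber in $R_f$, and show that the inertia group $I\leq\Gal(L/\Q)$ acts on each fiber with every orbit of size divisible by $[I:I\cap H]=\varphi(p^e)$, where $H=\Gal(L/K)$. Both proofs hinge on exactly the same two inputs — the cyclotomic containment from the referenced lemma and the total ramification of $p$ in $\Q(\zeta_{kp^e})/\Q(\zeta_k)$ — but the paper's version is more computational and constructs $F$ explicitly, while yours is more structural (orbit-counting against inertia) and avoids having to invoke commutativity of $\Gal(K/\Q)$ to move $\tau$ past $\sigma$, at the cost of working in the larger field $L$. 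Both are fine; your remark about sidestepping index divisibility in Dedekind's criterion is accurate and applies equally to the paper's proof.
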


\begin{proof}
	Take any root $ \alpha \in \overline{\Q} $ of $ f $.
	Set $ L \coloneqq \bbQ(\alpha) $ and $ K \coloneqq \bbQ(\zeta_{k p^e}) $.
	Then, $ L $ contains $ K $ by \cref{lem: parabolic parameter involves cyclotomic field}.
	Let $ h \in K[C] $ be the minimal polynomial of $ \alpha $ over $ K $.
	
	To begin with, we prove the equality 
	\[
    	f = \prod_{\sigma \in \Gal(K/\Q)} \sigma(h).
	\]
	Since $ \sigma(h) \mid f $ for any $ \sigma \in \Gal(K/\Q) $, it suffices to show that $ \sigma(h) \neq \tau(h) $ for any $ \sigma \neq \tau $, that is, $ \sigma(h) \neq h $ for any $ \sigma \neq 1 $.
	Suppose that $ \sigma(h) = h $.
	Since $ h \in K^\sigma [C] $ and $ L = K^\sigma (\alpha) $, we have
	$ [L \colon K^\sigma]  \le \deg h = [L \colon K] $.
	Thus, we have $ K^\sigma = K $.
	By Galois theory, we have $ \sigma = 1 $.
	
	Set $ K' \coloneqq \Q(\zeta_k) $ and take a prime ideal $ \frakp $ of $ \calO_{K'} $ above $ p $.
	By the assumption $ p \nmid k $, the prime ideal $ \frakp $ ramifies completely in $ \calO_K $, that is, $ \frakp \calO_K = \frakP^{\varphi(p^e)} $ for some prime ideal $ \frakP $ of $ \calO_{K} $.
	Then, for any $ \sigma \in \Gal(K/\Q) $ and $ \tau \in \Gal(K/K') $, we have $ \sigma \tau (h) \equiv \sigma(h) \bmod \frakP $.
	Therefore, we have
	\[
	   f \equiv \prod_{\sigma \in \Gal(K/K')} \sigma(h)^{\varphi(p^e)} \bmod \frakP.
	\]
	By letting $ F $ be the image of $ \prod_{\sigma \in \Gal(K/K')} \sigma(h) $ under $ \calO_K / \frakP \hookrightarrow \overline{\bbF}_p $, we obtain the claim.
\end{proof}

Finally, we prove \cref{thm: irreducibility inherited}.

\begin{proof}[Proof of $ \cref{thm: irreducibility inherited} $]
	Assume that $ \widetilde{\Delta}_{mk, m} $ is irreducible over $ \bbF_p $.
	Then, the congruence relation
	\[ 
	   \widetilde{\Delta}_{mkp^e, m} \equiv \widetilde{\Delta}_{mk, m}^{\varphi(p^e)} \bmod p
	\]
	in \cref{lem: mod_p_decomposition_Delta} is the irreducible decomposition of $ \widetilde{\Delta}_{mkp^e, m} $ over $ \bbF_p $.
	Take a monic irreducible factor $ f $ of $ \widetilde{\Delta}_{mkp^e, m} $ over $ \Q $.
	Then, we have
	$f \equiv \widetilde{\Delta}_{mk, m}^{N} \bmod p$
	for some positive integer $ N $.
	Thus, we have
	\[
    	\# \{ \text{roots of $ f $ in $ \overline{\bbF}_p $} \}
    	= 
    	\# \{ \text{roots of $ \widetilde{\Delta}_{mk, m} $ in $ \overline{\bbF}_p $} \}
    	= 
    	\deg \widetilde{\Delta}_{mk, m}
    	=
    	\varphi(k) \cdot \deg_c \delta_m (x, c).
	\]
	On the other hand, by \cref{lem: mod_p_decomposition_irr_comp}, there exists a polynomial $ F(C) \in \overline{\bbF}_p [C] $ such that $ f(C) \equiv F(C)^{\varphi(p^e)} \bmod p $.
	Thus, we have
	\[
    	\# \{ \text{roots of $ f $ in $ \overline{\bbF}_p $} \}
    	= 
    	\# \{ \text{roots of $ F $ in $ \overline{\bbF}_p $} \}
    	\le
    	\deg F
    	=
    	\frac{\deg f}{\varphi(p^e)}.
	\]
	By combining these two evaluations, we obtain the inequality
	\[
    	\varphi(k) \cdot \deg_c \delta_m (x, c)
    	\le
    	\frac{\deg f}{\varphi(p^e)},
	\]
	which is equivalent to the inequality
	\[
    	\deg f
    	\ge
    	\varphi(k p^e) \cdot \deg_c \delta_m (x, c)
    	=
    	\deg \widetilde{\Delta}_{mk p^e, m}.
	\]
	Thus, we have $ f = \widetilde{\Delta}_{mk p^e, m} $.
\end{proof}

\begin{ex} 
In \cref{tab: irreducibility inherited}, we give examples satisfying the assumption in \cref{thm: irreducibility inherited}, computed using SageMath.

{\renewcommand{\arraystretch}{1.2}% 表の行幅を1.2倍
\begin{table}[hptb]
	\centering
        \caption{Tuples $ (m, k, p) $ such that $ \widetilde{\Delta}_{mk, m} (C) $ is irreducible over $ \bbF_p $.}
	\label{tab: irreducibility inherited}
	\begin{tabular}{c c l}
		$ m $ & $\ k \ $ & $ p < 1000 $ \\
%		\hline\hline
		\specialrule{.1em}{.05em}{.05em} % \hlineより太い横線
		4   & 2 & 11, 37, 71, 83, 101, 103, 109, 137, 223, 233, 283, 353, 419, 433, 439,\vspace{-.4em} %以下の行全体を少し上にずらす
        \\
            &   & 449, 479, 509, 541, 547, 587, 739, 797, 811, 827, 857, 887, 953
            \vspace{.2em} %以下の行全体を少し下にずらす
        \\
    		& 3 & 59, 167, 239, 419, 449, 617, 683, 701, 719, 743, 863 \\
    		& 4 & 3, 31, 139, 151, 331, 479, 607, 743, 839, 883 \\
    		& 5 & 43, 103, 233, 317, 463, 503, 523, 547, 587, 983, 997 \\
    		& 6 & 137, 233, 251, 491, 521, 647, 821, 929, 947 \\
    		& 7 & 47, 73, 131, 227, 229, 269, 397, 523, 577, 733, 773, 857, 859, 887, 971 \\
    		& 9 & 23, 29, 83, 137, 239, 599, 911 \\
    		& 10 & 113, 193, 233, 257, 307, 317, 683, 757, 787, 853, 857, 863, 887 \\
    		& 11 & 41, 79, 211, 227, 281, 347, 491, 541, 547, 601, 761 \\
    		& 13 & 149, 197, 271, 331, 383, 431, 709 \\
    		& 14 & 227, 409, 661, 761, 773, 857 \\
    		& 17 & 31, 41, 199, 241, 277, 311, 317, 571, 617, 751, 823, 857 \\
    		& 18 & 5, 257, 383, 389, 587, 599, 743, 839 \\
    		& 19 & 29, 53, 281, 421, 433, 523, 547, 857, 971, 991 \\
		\hline
		5   & 2 & 23, 173, 541, 569, 709, 761 \\
            & 3 & 257, 491, 587, 617, 911, 941, 947, 983 \\
            & 4 & 7, 47, 71, 139, 523, 647 \\
            & 5 & 97, 127, 163, 307, 353, 467, 523, 587, 613, 977 \\
            & 6 & 23, 941, 983 \\
            & 7 & 283, 661 \\
            & 9 & 419, 641 \\
            & 10 & 167, 277, 317, 443, 947 \\
            & 11 & 41, 349, 733, 827, 937 \\
            & 13 & 37, 331, 383, 431, 821 \\
            & 14 & 283, 409, 691, 761 \\
            & 17 & 97, 107, 547, 907, 983 \\
            & 18 & 23, 509, 677 \\
            & 19 & 751, 811, 827 \\
		\hline
        6   & 2 & 29, 71, 107, 499, 743, 809, 911, 947 \\
            & 3 & 113, 359 \\
            & 4 & 19, 167, 499, 743 \\
            & 5 & 23, 107, 227, 787 \\
            & 6 & 293, 857, 953 \\
            & 7 & 19 \\
            & 9 & 509, 857 \\
            & 10 & 263, 317, 937 \\
            & 11 & 29, 677 \\
            & 13 & 431, 661 \\
            & 14 & 691, 733 \\
            & 17 & 113, 283, 709 \\
            & 18 & 131, 653, 929, 941 \\
            & 19 & 97, 281, 401 \\
        \hline
	\end{tabular}
\end{table}
}
\end{ex}

\begin{rem}
If $ \cyc_k (x) $ is reducible over $ \bbF_p $ for all $ p $, so is $ \widetilde{\Delta}_{mk, m} (C) $.
Harrison~\cite{Harrison07} proved that $ \cyc_k (x) $ is reducible over $ \bbF_p $ for all $ p $ if and only if the discriminant of $ \cyc_k (x) $ is a square number in $ \Z $.
Such $ k \le 50 $ include
\[
    k = 8, 12, 15, 16, 20, 21, 24, 28, 30, 32, 33, 35, 36, 39, 40, 42, 44, 45, 48.
\]
Here, we remark that the discriminant of $ \cyc_k (x) $ equals to
\[
    (-1)^{ (\varphi(k) / 2) \cdot \# \{ p \mid k \}} n^{\varphi(k)}
    \prod_{p \mid k} p^{-\varphi(k) / (p-1)}
\]
(see \cite[Page 269]{Ribenboim}).
\end{rem}

% --------------------------------------------------------------------------
%		References
% --------------------------------------------------------------------------
\newpage
\bibliographystyle{alpha}
\bibliography{dynamic}

\begin{thebibliography}{MST24}

\bibitem[BK22]{BK22}
X.~Buff and S.~Koch.
\newblock Totally real points in the mandelbrot set.
\newblock {\em preprint, arXiv: 2211.15725}, 2022.

\bibitem[GF95]{GF95}
D.~Giarrusso and Y.~Fisher.
\newblock A parameterization of the period {$3$} hyperbolic components of the {M}andelbrot set.
\newblock {\em Proc. Amer. Math. Soc.}, 123(12):3731--3737, 1995.

\bibitem[Har07]{Harrison07}
B.~A. Harrison.
\newblock On the reducibility of cyclotomic polynomials over finite fields.
\newblock {\em Amer. Math. Monthly}, 114(9):813--818, 2007.

\bibitem[Hug21]{Hug21}
V.~Huguin.
\newblock {\em {{\'E}tude alg{\'e}brique des points p{\'e}riodiques et des multiplicateurs d'une fraction rationnelle}}.
\newblock Theses, {Universit{\'e} Paul Sabatier - Toulouse III}, October 2021.

\bibitem[Mil06]{Milnorbook06}
J.~Milnor.
\newblock {\em Dynamics in one complex variable}, volume 160 of {\em Annals of Mathematics Studies}.
\newblock Princeton University Press, Princeton, NJ, third edition, 2006.

\bibitem[Mor96]{Morton96}
P.~Morton.
\newblock On certain algebraic curves related to polynomial maps.
\newblock {\em Compositio Mathematica}, 103(3):319--350, 1996.

\bibitem[MST24]{MST24}
Y.~Murakami, K.~Sano, and K.~Takehira.
\newblock Arithmetic properties of multiplier polynomials for certain polynomial maps, 2024.

\bibitem[MV95]{Morton-Vivaldi}
P.~Morton and F.~Vivaldi.
\newblock Bifurcations and discriminants for polynomial maps.
\newblock {\em Nonlinearity}, 8(4):571--584, 1995.

\bibitem[Rib72]{Ribenboim}
P.~Ribenboim.
\newblock {\em Algebraic numbers}, volume Vol. 27 of {\em Pure and Applied Mathematics}.
\newblock Wiley-Interscience [A Division of John Wiley \& Sons, Inc.], New York-London-Sydney, 1972.

\bibitem[Sch73]{Schinzel73}
A.~Schinzel.
\newblock On the product of the conjugates outside the unit circle of an algebraic number.
\newblock {\em Acta Arith.}, 24:385--399, 1973.

\bibitem[Sil07]{Silbook00}
J.~H. Silverman.
\newblock {\em The arithmetic of dynamical systems}, volume 241 of {\em Graduate Texts in Mathematics}.
\newblock Springer, New York, 2007.

\bibitem[VH92]{Vivaldi-Hatjispyros}
F.~Vivaldi and S.~Hatjispyros.
\newblock Galois theory of periodic orbits of rational maps.
\newblock {\em Nonlinearity}, 5(4):961--978, 1992.

\end{thebibliography}

% --------------------------------------------------------------------------
\end{document}